\documentclass[final,3p]{elsarticle}
 \usepackage{graphics}
 \usepackage{graphicx}
 \usepackage{epsfig}
\usepackage{amssymb}
 \usepackage{amsthm}
 \usepackage{lineno}
 \usepackage{amsmath}
   \numberwithin{equation}{section}
\usepackage{mathrsfs}
\newtheorem{thm}{Theorem}[section]

\newtheorem{lem}[thm]{Lemma}

\newtheorem{defn}[thm]{Definition}

\biboptions{sort&compress,square}
\allowdisplaybreaks

\begin{document}
\begin{frontmatter}
\author[rvt1,rvt2]{Jin Hong}
\ead{jhong@nenu.edu.cn}
\author[rvt2]{Yong Wang\corref{cor2}}
\ead{wangy581@nenu.edu.cn}
\cortext[cor2]{Corresponding author.}
\address[rvt1]{School of Mathematics and Statistics, Yili Normal University, Yining, 835000, China}
\address[rvt2]{School of Mathematics and Statistics, Northeast Normal University, Changchun, 130024, China}

\title{The Chern-Simons action for perturbed Dirac triples}
\begin{abstract}
In \cite{PO1} and \cite{PO2}, Pfante defined a noncommutative Chern-Simons action for 3-summable spectral triples and computed the Chern-Simons action for $\mathrm{SU}_q(2)$ and the noncommutative 3-torus. In this paper,we compute the Chern-Simons action for well-know perturbed Dirac triples.
\end{abstract}
\begin{keyword}
 perturbed Dirac triples; Chern-Simons action; noncommutative residue.
\end{keyword}
\end{frontmatter}
\section{Introduction}

Noncommutative geometry, as formulated by Connes \cite{co5}, extends the classical notion of a differentiable manifold to spaces described by algebras, offering a powerful framework for quantum physics. The central technical tool is the spectral triple $(\mathcal{A}, \mathcal{H}, D)$, consisting of an algebra $\mathcal{A}$, a Hilbert space $\mathcal{H}$, and a Dirac operator $D$.

%Spectral triples are the starting point for the study of noncommutative manifolds. One can think of them being a generalization of the notion of ordinary differential manifolds because every spin manifold without boundary can be encoded uniquely by its spectral triple.

It turns out that themain technical device in noncommutative geometry, a spectral triple, naturally gives rise to a gauge theory. A key ingredient in many topological gauge theories is the Chern–Simons action. The Chern–Simons form was first introduced in \cite{CS} as a boundary term when the authors were computing the first Pontryagin number of a 4-manifold. It can be defined as a secondary characteristic class
by the transgression of the Chern character on principal bundles.
the noncommutative Chern-Simons action is the classical one 
\begin{align}\label{1.1}
	S_{\mathrm{CS}}(A):=\frac{k}{4 \pi} \int_{M} \operatorname{Trace}\left(A \wedge d A+\frac{2}{3} A \wedge A \wedge A\right),
\end{align}
where $A = A_{\mu} dx^{\mu}$ is Chern–Simons gauge ﬁeld and $k$ is called level of the theory. 
The classical Chern-Simons form is a topological invariant in the sense that it is independent of the background metric.
This classical Chern–Simons invariant has wide applications in differential geometry, global analysis, topology, and theoretical physics.

A noncommutative Chern–Simons action for 3‑summable spectral triples was introduced by Pfante in \cite{PO2}. It is gauge invariant modulo a Fredholm index that originates from the local index formula \cite{Co}. In this construction, the action includes not only a 3‑cocycle $\varphi_3$ but also a 1‑cocycle $\varphi_1$, and the pair $(\varphi_1,\varphi_3)$ forms a $(b,B)$-cocycle. When $\varphi_1$ vanishes, the action reduces to the definition given by Connes and Chamseddine. Pfante evaluated the Chern–Simons action on two explicit examples: the quantum compact group $\mathrm{SU}_q(2)$ \cite{PO2} and the noncommutative 3‑torus $C^{\infty}(\mathrm{T}^3_{\theta})$ \cite{PO1}. The $\varphi_1$ term yields a non‑trivial contribution in the $\mathrm{SU}_q(2)$ case, whereas it disappears on the 3‑torus.
In \cite{CF}, the Chern–Simons action is defined by employing a Chern–Simons form whose construction in cyclic cohomology is due to Quillen \cite{Qu}.

Considerable attention has also been given to the Dirac operator with torsion, the Dirac operator with inner fluctuations, and their respective perturbed Dirac triples.
Bismut \cite{Bismut} proved a local index theorem for Dirac operators on a Riemannian manifold $M$ associated with connections on $TM$ which have non zero torsion.
In \cite{Ac2}, Ackermann and Tolksdorf proved a generalized version of the well-known Lichnerowicz formula for the square of the most general Dirac operator with torsion $D_T$. This operator lives on an even-dimensional spin manifold and is associated to a metric connection with torsion.
In \cite{co4}, Connes and Chamseddine proved in the general framework
of noncommutative geometry that the inner fluctuations of the spectral action can be computed as residues and give exactly the counterterms for the Feynman graphs with fermionic internal lines, and showed that for geometries of dimension less than or equal to four the obtained terms add up to a sum of a Yang-Mills action with a Chern-Simons action.

In \cite{PO2}, Pfante shows that the Chern–Simons action is not a topological invariant. It generally depends on the particular spectral triple  $(\mathcal{A}, \mathcal{H}, D)$. It is natural to consider other spectral triples. Motivated by the Chern–Simons action \cite{PO2} and the perturbed Dirac triples \cite{co4, WWW}, the purpose of this paper is to generalize the results in \cite{PO1, PO2} and get some new Chern–Simons actions which is the extension of Chern–Simons actions to the perturbed Dirac triples. Our main theorems are as follows. 

%Pfante \cite{PO2} shows that the noncommutative Chern–Simons action is the classical one \eqref{1.1} if the spectral triple  $\left(C^{\infty}(M), L^{2}(S), D\right)$  comes from a 3-dimensional, closed spin manifold $M$ with gauge group  $\operatorname{SU}(N)$ .

%If $(\mathcal{A}, \mathcal{H}, \mathcal{D})$ is a spectral triple, then  $\left(\mathrm{M}_{N}(\mathcal{A}), \mathcal{H} \otimes \mathbb{C}^{N}, \mathcal{D} \otimes I_{N}\right)$ is a spectral triple over the matrix algebra  $\mathrm{M}_{N}(\mathcal{A})$  of $\mathcal{A}$, for a positive integer  $N \in \mathbb{N}$, satisfying all requirements imposed to $(\mathcal{A}, \mathcal{H}, \mathcal{D})$.
% and on the choice of Dirac operators

%Considering Dirac operators with torsion, Hanisch et al. \cite{Ha} derived a formula for the gravitational part of the spectral action for Dirac operators on 4-dimensional manifolds with totally anti-symmetric torsion, while Pf${\mathrm{\ddot{a}}}$ffle and Stephan \cite{pf3} gave a Lichnerowicz-type formula for the Dirac operator with torsion.

%we compute the Chern–Simons action with respect to these spectral triples

\begin{thm}\label{thm1}
	The Chern–Simons action on the smooth compact Riemannian 3-dimensional manifold $M$
	with respect to the spectral triple $\big( C^{\infty}(M) \otimes M_{N}(C), L^2(M, S(TM)) \otimes C^N, \tilde{D} \otimes Id_{N}\big)$ is given by
	\begin{align}
		S_{\mathrm{CS}}(A) = &\frac{k}{4\pi} c_{0} \int_{M} \operatorname{tr} \bigg(A  \wedge d A+\frac{2}{3} A \wedge A  \wedge A \bigg) \\
		&-8\sqrt{-1}\pi^{2} k \int_{M} \operatorname{tr}\big[ a^{0}\left\langle e_{1}^{*} \wedge e_{2}^{*}\wedge e_{3}^{*} , da^1 \wedge \nabla X^{*} \right\rangle \big] d{\rm Vol}_M\nonumber
	\end{align}
	for $\tilde{D}=D+\sqrt{-1}c(X)$ , $A=a^{0}d a^{1} $, and $a^{0}, a^{1} \in M_{N}\big(C^{\infty}(M)\big).$
\end{thm}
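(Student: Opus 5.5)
Following Pfante \cite{PO2}, I will write the Chern–Simons action of the 3-summable triple as
\[
S_{\mathrm{CS}}(A)=\frac{k}{4\pi}\Big(\varphi_3(A,A,A\text{-terms})+\varphi_1\text{-term}\Big),
\]
where $(\varphi_1,\varphi_3)$ is the residue $(b,B)$-cocycle of the local index formula built from $\tilde D=D+\sqrt{-1}c(X)$. Concretely,
\[
\varphi_3(a^0,a^1,a^2,a^3)=c\,\mathrm{Res}\big(a^0[\tilde D,a^1][\tilde D,a^2][\tilde D,a^3]|\tilde D|^{-3}\big)
\]
plus lower residue corrections, and $\varphi_1(a^0,a^1)$ is a combination of $\mathrm{Res}(a^0[\tilde D,a^1]|\tilde D|^{-1})$, $\mathrm{Res}(a^0\nabla([\tilde D,a^1])|\tilde D|^{-3})$ and $\mathrm{Res}(a^0\nabla^2([\tilde D,a^1])|\tilde D|^{-5})$, with $\nabla=[\tilde D^2,\cdot]$.

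The first observation is that $[\tilde D,a]=[D,a]=c(da)$, since $c(X)$ commutes with functions (tensored with $Id_N$). So all perturbation effects enter only through $|\tilde D|^{-s}$ and the derivation $\nabla$.

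\textbf{The $\varphi_3$ part.} Here the order count is already critical: $|\tilde D|^{-3}$ only contributes through its principal symbol $|\xi|^{-3}$. Hence $\varphi_3$ coincides with the unperturbed case, and Pfante's computation for closed spin 3-manifolds yields the classical term $\frac{k}{4\pi}c_0\int_M \operatorname{tr}(A\wedge dA+\frac23 A\wedge A\wedge A)$. I will simply invoke that result after checking that the trace of three Clifford elements gives the volume form, $\operatorname{tr}(c(e_i)c(e_j)c(e_l))\propto\epsilon_{ijl}$.

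\textbf{The $\varphi_1$ part.} This is where $X$ appears. For $\mathrm{Res}(a^0c(da^1)|\tilde D|^{-1})$ I need the symbol of $\tilde D^{-1}$, or of $|\tilde D|^{-1}=\tilde D\,\tilde D^{-2}$, up to order $-3$. I plan to compute this with the symbol calculus. Write
\[
\sigma(\tilde D)=\sqrt{-1}c(\xi)+\big(\text{spin-connection term}+\sqrt{-1}c(X)\big),
\]
and obtain $\tilde D^{-2}$ from the generalized Lichnerowicz formula
\[
\tilde D^2=\Delta+\tfrac{s}{4}-\sqrt{-1}\,(\nabla_{e_j}X\ \text{terms})\,c(e_j)c(\cdot)+|X|^2+\dots
\]
Only the terms of $\sigma_{-3}$ that are linear in $\nabla X$ and carry a Clifford factor can survive. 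After multiplication with $c(da^1)$, the spinor trace $\operatorname{tr}(c(da^1)c(e_i)c(e_j))$ produces $\epsilon$ contracted with $da^1\wedge\nabla X^*$. Integration over the cosphere $|\xi|=1$ then kills odd terms and fixes the constant. Terms quadratic in $X$, or carrying $c(X)$ alone, give either traces of an even number of Clifford elements in odd dimension or odd powers of $\xi$, and both vanish. The terms involving $\nabla=[\tilde D^2,\cdot]$ are treated similarly, using the symbol of $[\tilde D^2,c(da^1)]$. I expect the scalar curvature and the unperturbed contributions to vanish or be total derivatives, as in the torus case. Assembling everything should give
\[
-8\sqrt{-1}\pi^2 k\int_M \operatorname{tr}\big[a^0\langle e_1^*\wedge e_2^*\wedge e_3^*,\,da^1\wedge\nabla X^*\rangle\big].
\]

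The main obstacle is the careful computation of the order $-3$ symbols of $|\tilde D|^{-1}$ and of the $\nabla$-terms, including the derivative-of-symbol terms $\partial_\xi\sigma\,\partial_x\sigma$. The delicate parts are tracking the signs and factors of $\sqrt{-1}$, and the normalization constant $8\pi^2$ coming from the cosphere volume $4\pi$. To keep this manageable I will work in normal coordinates at a point, where the connection coefficients vanish, so that only $\nabla X$ survives.
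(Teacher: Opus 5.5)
Your strategy matches the paper's. You use $[\tilde D,a]=c(da)$. You note that $\varphi_3$ sees only the principal symbol $|\xi|^{-3}$ and so gives the classical term. And in $\varphi_1$ the only surviving piece is the term $-\frac{\sqrt{-1}}{2}|\xi|^{-3}\sum_{a,l}\partial_{x_a}(X_l)c(e_a)c(e_l)$ of $\sigma_{-3}(|\tilde D|^{-1})(x_0)$, whose trace against $a^0c(da^1)$ is $\sqrt{-1}\,\mathrm{tr}\big(a^0\langle e_1^*\wedge e_2^*\wedge e_3^*,da^1\wedge\nabla X^*\rangle\big)$.

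However, the proposal does not actually derive the coefficient in the statement, and that coefficient is the content of the theorem. The action you start from, $\frac{k}{4\pi}(\varphi_3+\varphi_1\text{-term})$, is not the one being used. Definition 2.1 reads $S_{\mathrm{CS}}(A)=6\pi k\,\varphi_3(AdA+\frac23A^3)-2\pi k\,\varphi_1(A)$, where $\varphi_3$ is $\frac1{12}$ times the residue of $a^0[\tilde D,a^1][\tilde D,a^2][\tilde D,a^3]|\tilde D|^{-3}$, with no lower corrections. The paper obtains $\mathrm{Trace}\otimes\varphi_1(a^0,a^1)=4\sqrt{-1}\pi\int_M\mathrm{tr}(a^0\langle\cdots\rangle)\,d\mathrm{Vol}_M$. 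Here the $4\pi$ is $\mathrm{Vol}(S^2)$, the residue is taken without a $(2\pi)^{-3}$, and the sign uses the convention $\mathrm{tr}(c(e_1)c(e_2)c(e_3))=-2$. The final coefficient is then $-8\sqrt{-1}\pi^2k=-2\pi k\cdot4\sqrt{-1}\pi$. With your prefactor taken literally you would get $+\sqrt{-1}k$. The minus sign and the second factor $2\pi$ come from the weight of $\varphi_1$ in the definition, not from the cosphere. The overall sign also requires fixing the spinor representation, which you never do.

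Two of your vanishing arguments also need repair.

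\emph{The Clifford parity rule.} Your rule is wrong as stated. For $3$-dimensional spinors with $\mathrm{tr}[\mathrm{id}]=2$, traces of even products do not vanish: $\mathrm{tr}(c(v)c(w))=-2g(v,w)$. What vanishes is the trace of a single $c(v)$ or of a bivector, while the top-degree element has nonzero trace. The $|X|^2$, $X_aX_b\xi_a\xi_b$ and $\partial_{x_b}(X_a)\xi_a\xi_b$ terms die because, after multiplication by $c(da^1)$, they are scalar multiples of one Clifford vector. You must also check, via Lemma 3.4, that no term of the form $c(X)\cdot(\text{even in }\xi)$ appears, since $\mathrm{tr}(c(da^1)c(X))\neq0$.

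\emph{The $\nabla$- and $\nabla^2$-terms.} These cannot be left as ``vanish or are total derivatives'', because a total derivative multiplied by $a^0$ does not integrate to zero. The paper (Lemma 3.5 and Parts ii), iii)) shows they vanish pointwise after the trace. The mechanism is as follows.
\begin{itemize}
\item One has $\tilde D^2=D^2-2\sqrt{-1}\nabla_X+\sqrt{-1}\sum_jc(e_j)c(\nabla_{e_j}X)+|X|^2$. Hence $\sigma_1([\tilde D^2,c(da^1)])$ is $X$-independent and is a single Clifford vector at $x_0$.
\item The $X$-dependent order-zero part of $[\tilde D^2,c(da^1)]$ is $-2\sqrt{-1}c(\nabla_Xda^1)$ plus a commutator with $c(da^1)$, and both are traceless.
\item The $X$-part of $\sigma_{-4}(|\tilde D|^{-3})$ is the scalar $\frac32|\xi|^{-5}\sum_a\xi_a\{c(e_a),c(X)\}$, so its product with the single vector $\sigma_1([\tilde D^2,c(da^1)])$ is traceless.
\item $\partial_{x_j}\sigma_{-3}(|\tilde D|^{-3})(x_0)=0$.
\item $\sigma_2(\nabla^2([\tilde D,a^1]))(x_0)$ is a commutator with $c(da^1)$ plus a single-vector term.
\end{itemize}
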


\begin{thm}\label{thm2}
	The Chern–Simons action on the smooth compact Riemannian 3-dimensional manifold $M$
	with respect to the spectral triple $\big( C^{\infty}(M) \otimes M_{N}(C), L^2(M, S(TM)) \otimes C^N, D_T \otimes Id_{N}\big)$ is given by
	\begin{align}
		S_{\mathrm{CS}}(A) = &\frac{k}{4\pi} c_{0} \int_{M} \operatorname{tr} \bigg(A  \wedge d A+\frac{2}{3} A \wedge A  \wedge A \bigg) 
	\end{align}
	for $D_T=D+fc(e_1)c(e_2)c(e_3)$ , $A=a^{0}d a^{1} $, and $a^{0}, a^{1} \in M_{N}\big(C^{\infty}(M)\big)$, where $\big( A, \operatorname{grad} f \big)$ denotes the pair between the one-form and vector fields.
\end{thm}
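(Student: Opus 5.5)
The plan is to follow Pfante's setup. For a 3-summable spectral triple, the noncommutative Chern--Simons action is built from the $(b,B)$-cocycle $(\varphi_1,\varphi_3)$ coming from the local index formula:
\[
S_{\mathrm{CS}}(A)=\frac{k}{4\pi}\Big(\varphi_3\big(A\,dA+\tfrac{2}{3}A^3\big)+c\,\varphi_1(A)\Big),
\]
with the cocycles given by Wodzicki residues. Up to normalization,
\[
\varphi_1(a^0,a^1)=\operatorname{Res}\big(a^0[D_T,a^1]|D_T|^{-1}\big)+\dots
\]
and $\varphi_3$ is the analogous residue term with $|D_T|^{-3}$. Here $D_T=D+f\,c(e_1)c(e_2)c(e_3)$, tensored with $\mathrm{Id}_N$. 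We compute each cocycle by a symbol calculation, using the residue density $\int_{|\xi|=1}\operatorname{tr}\sigma_{-3}(\cdot)\,\sigma(\xi)$ on the three-manifold.

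\textbf{Step 1 (the commutator).} Since $f\,c(e_1)c(e_2)c(e_3)$ is a zeroth-order multiplication operator commuting with the scalar matrices of functions $a^1$ (after tensoring with $\mathrm{Id}_N$, the Clifford part acts on the spinor factor and $a^1$ on $C^N$), we get $[D_T,a^1]=[D,a^1]=c(da^1)$. So the numerator of each cocycle is unchanged, and only the powers $|D_T|^{-1}$ and $|D_T|^{-3}$ differ from the unperturbed case.

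\textbf{Step 2 (the symbols of $|D_T|^{-p}$).} We expand these symbols. In dimension three $c(e_1)c(e_2)c(e_3)$ is central in the Clifford algebra; it is $\pm\sqrt{-1}$ times the identity on the irreducible spinor module. Hence $D_T=D+\lambda f$ with $\lambda$ a scalar, and
\[
D_T^2=D^2+2\lambda f D+\lambda c(\operatorname{grad}f)+\lambda^2f^2 .
\]
\begin{itemize}
\item For $\varphi_3$ only the leading symbol $|\xi|^{-3}$ of $|D_T|^{-3}$ contributes, so $\varphi_3$ coincides with the classical one. Pfante's computation then gives $c_0\int_M\operatorname{tr}(A\,dA+\tfrac23A^3)$.
\item For $\varphi_1$ we need $\sigma_{-3}$ of $a^0c(da^1)|D_T|^{-1}$, that is, the symbols of $|D_T|^{-1}$ down to order $-3$. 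The new terms are:
 \begin{itemize}
 \item the order $-2$ term, proportional to $f\,c(\xi)|\xi|^{-3}$, coming from $2\lambda fD$;
 \item the order $-3$ terms, involving $c(\operatorname{grad}f)|\xi|^{-3}$, $f^2|\xi|^{-3}$, and derivatives of the first term.
 \end{itemize}
\end{itemize}

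\textbf{Step 3 (main obstacle).} The hard step is to show that all these $f$-dependent contributions to $\varphi_1$ integrate to zero; the statement's remark about the pairing $(A,\operatorname{grad}f)$ suggests a term like $\int\operatorname{tr}\,a^0(da^1,\operatorname{grad}f)$ appears at an intermediate stage.

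\begin{itemize}
\item \emph{Odd powers of $\xi$.} Terms with an odd total power of $\xi$ vanish on $|\xi|=1$.
\item \emph{Spinor traces.} The remaining terms carry Clifford traces. Examples are $\operatorname{tr}[c(da^1)c(\operatorname{grad}f)]$, which is proportional to $\langle da^1,\operatorname{grad}f\rangle$, and $\operatorname{tr}[c(da^1)]=0$.
\item \emph{Cancellation.} We compute the pieces separately: the part from the expansion of $(D_T^2)^{-1/2}$, and the part from $\partial_\xi\sigma\cdot\partial_x$ cross terms in the composition. Their coefficients should cancel in the standard normalization, with the $\tfrac{1}{2}$ from the square root against the factor $2$ in $2\lambda fD$.
\item \emph{Fallback.} If an explicit cancellation fails, we would instead use that $\varphi_1$ enters $S_{\mathrm{CS}}$ only via Pfante's combination, and that the pairing term $\int_M\operatorname{tr}[a^0(da^1,\operatorname{grad}f)]$ is exact after using the $b,B$ relation. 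In that case it is absorbed or vanishes for the action.
\end{itemize}

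Combining Steps 1–3 gives the stated formula.
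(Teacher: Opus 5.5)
\textbf{There is a genuine gap: Step 3, where the theorem is actually decided, is asserted rather than proved.}

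Your Steps 1 and 2 are fine and agree with the paper. First, $[D_T,a^1]=c(da^1)$. Second, $\varphi_3$ is unchanged, because only the principal symbol $|\xi|^{-3}$ of $|D_T|^{-3}$ enters $\sigma_{-3}\big(a^0c(da^1)c(da^2)c(da^3)|D_T|^{-3}\big)$.

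The theorem requires the $f$-dependent part of $\varphi_1(a^0,a^1)$ to vanish identically. Your proposal neither computes this part nor accounts for all of it. You only consider $\operatorname{Res}\big(a^0[D_T,a^1]|D_T|^{-1}\big)$. But $\varphi_1$ also contains $-\frac14\operatorname{Res}\big(a^0\nabla([D_T,a^1])|D_T|^{-3}\big)$ and $\frac18\operatorname{Res}\big(a^0\nabla^2([D_T,a^1])|D_T|^{-5}\big)$, with $\nabla=[D_T^2,\cdot\,]$. Since $D_T^2=D^2+2\lambda fD+\lambda c(\operatorname{grad}f)+\lambda^2f^2$, these terms depend on $f$ too.

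In the $\nabla$-term the $f$-contributions are individually nonzero:
\begin{itemize}
\item The piece $2\lambda f\sum_{j,\gamma}\partial_{x_j}[e_\gamma(a^1)]c(e_j)c(e_\gamma)$ of $\sigma_0(\nabla[D_T,a^1])(x_0)$, against $\sigma_{-3}(|D_T|^{-3})(x_0)=|\xi|^{-3}$, gives $-4\lambda\, a^0 f\sum_j\partial_{x_j}[e_j(a^1)]\,\mathrm{Vol}(S^2)$.
\item This is cancelled only by $\sigma_1(\nabla[D_T,a^1])\,\sigma_{-4}(|D_T|^{-3})$. There the $f$-part $-3\sqrt{-1}\lambda f|\xi|^{-5}c(\xi)$ of $\sigma_{-4}(|D_T|^{-3})$ meets $-2\sqrt{-1}\sum_j\xi_j\partial_{x_j}c(da^1)$ and gives $+4\lambda$ times the same quantity. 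This is the paper's (II-1)+(II-2).
\item The $\nabla^2$-term needs a further observation. Every $f$-dependent piece of $\sigma_2(\nabla^2[D_T,a^1])(x_0)$ is a commutator with a spinor-only Clifford element such as $c(\xi)$. Such a commutator is traceless against $a^0$ by cyclicity.
\end{itemize}

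Even for the first term, ``the coefficients should cancel'' is not an argument. They do cancel, but this has to be computed. At the centre of normal coordinates, $\sigma_{-3}(|D_T|^{-1})$ contains two relevant terms:
\begin{itemize}
\item $-\frac12|\xi|^{-3}\lambda c(\operatorname{grad}f)$, coming from $\sigma_0(D_T^2)$;
\item $\frac32|\xi|^{-5}\lambda\,\xi(f)\,c(\xi)$ with $\xi(f)=\sum_a\xi_a\partial_{x_a}f$, coming from the $x$-derivative of the order-one symbol $2\sqrt{-1}\lambda f c(\xi)$ in the parametrix.
\end{itemize}
Trace these against $a^0c(da^1)$ and use $\int_{|\xi|=1}\xi_a\xi_b\,\sigma(\xi)=\frac{4\pi}{3}\delta_{ab}$. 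The two terms give $\pm\lambda\, a^0\langle da^1,df\rangle\,\mathrm{Vol}(S^2)$ with opposite signs; this is the paper's (I-2)+(I-3).

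Your fallback cannot replace this computation. $S_{\mathrm{CS}}$ evaluates $\varphi_1$ on the given one-form $A$, not on a cycle. So a surviving term such as $\int_M\operatorname{tr}\big(a^0\langle da^1,\operatorname{grad}f\rangle\big)$ would change the action. Such a term is not zero for general $a^0,a^1,f$, and the $(b,B)$-cocycle identity does not remove it.

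Finally, $c(e_1)c(e_2)c(e_3)$ is self-adjoint and $\big(c(e_1)c(e_2)c(e_3)\big)^2=+1$. So it acts as $\pm1$, not as $\pm\sqrt{-1}$; in the paper's convention $\operatorname{tr}\,c(e_1)c(e_2)c(e_3)=-2$, it is $-1$. With $\lambda=\pm\sqrt{-1}$, $D_T$ would not even be self-adjoint. With $\lambda=\pm1$, your reduction to a scalar potential $D_T=D+\lambda f$ is a real simplification over the paper, which carries $c(e_1)c(e_2)c(e_3)$ through all the Clifford traces. The three cancellations above still have to be carried out.
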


The paper is organized in the following way. In section 2, we first recall the definition of the Chern–Simons action \cite{PO2}. In Sections 3 and 4, we obtain two different spectral triples via the Dirac operator with inner fluctuations and Dirac operators with torsion, and we compute the Chern–Simons action with respect to these spectral triples.

\section{The Chern–Simons action}
To introduce the concept of the Chern–Simons action in noncommutative geometry, we first present three additional constraints and then give its general definition. see \cite{PO2} for a full discussion.

$\mathbf{ a) Dimension.}$  There is an integer $n$ such that the decreasing sequence $(\lambda_{k})_{k \in \mathbb{N}}$ of eigenvalues of the compact operator $|\mathcal{D}|^{-1}$ satisfies
\begin{align}
	\lambda_{k}=O\big(k^{-1 / n}\big)\nonumber
\end{align}
when $ k \rightarrow \infty$ .
The smallest integer which fulfils this condition is called the dimension of the spectral triple. In the case of a $p$-dimensional, closed spin manifold, the dimension of the triple $\left(C^{\infty}(M), L^{2}(S), \mathcal{D}\right)$ coincides with the dimension $p$ of the manifold.

$\mathbf{ b) Regularity.}$ Any element $b$ of the algebra generated by $\pi(\mathcal{A})$  and  $[\mathcal{D}, \pi(\mathcal{A})]$  is contained in the domain of  $\delta^{k}$  for all  $k \in \mathbb{N}$, i.e., $\delta^{k}(b)$ is densely defined and has a bounded extension on $\mathcal{H}$.

Oliver Pfante \cite{PO2} introduce the notion of the dimension spectrum of a spectral triple. This is the set $\Sigma \subset \mathbb{C}$ of singularities of functions
\begin{align}
\zeta_{b}(z)=\operatorname{Trace}\left(b|\mathcal{D}|^{-2 z}\right), \quad \operatorname{Re} z>p / 2, b \in \mathcal{B}.\nonumber
\end{align}

$\mathbf{ c) Dimension \; spectrum.}$ $\Sigma$ is a discrete subset of $\mathbb{C}$. Therefore, for any element $b$ of the algebra $\mathfrak{B}$ the functions
\begin{align}
\zeta_{b}(z)=\operatorname{Trace}\left(b|\mathcal{D}|^{-2 z}\right)\nonumber
\end{align}
extend holomorphically to  $\mathbb{C} \backslash \Sigma$.

\begin{defn}\textup{\cite{PO2}}\label{CSdefn}
 Let $( \mathcal{A}, \mathcal{H}, \mathcal{D} )$ be a spectral triple satisfying the dimension, regularity and Dimension spectrum conditions, $ N \in \mathbb{N}$ , and  $A \in \mathrm{M}_{N}\left(\Omega^{1}(\mathcal{A})\right)$ a hermitian matrix of 1-forms. We define a Chern-Simons action by

\begin{align}
S_{\mathrm{CS}}(A)=6 \pi k \phi_{3}\left(A d A+\frac{2}{3} A^{3}\right)-2 \pi k \phi_{1}(A)
\end{align}
for an integer k, the cyclic cocycle  $\left(\phi_{3}, \phi_{1}\right) $ as follows

\begin{align}
	&\varphi_{3}\left(a^{0}, a^{1}, a^{2}, a^{3}\right)=\frac{1}{12} \int\hspace{-1.05em}- a^{0} \left[\mathcal{D}, a^{1}\right] \left[\mathcal{D}, a^{2}\right] \left[\mathcal{D}, a^{3}\right]|\mathcal{D}|^{-3},\\
	&\varphi_{1}\left(a^{0}, a^{1}\right)=\int\hspace{-1.05em}- a^{0}\left[\mathcal{D}, a^{1}\right]|\mathcal{D}|^{-1} -\frac{1}{4} \int\hspace{-1.05em}- a^{0} \nabla\left(\left[\mathcal{D}, a^{1}\right]\right)|\mathcal{D}|^{-3} +\frac{1}{8} \int\hspace{-1.05em}- a^{0} \nabla^{2}\left(\left[\mathcal{D}, a^{1}\right]\right)|\mathcal{D}|^{-5},
\end{align}
where $a^{0}, a^{1}, a^{2}, a^{3} \in \mathcal{A} $ and $\nabla(T)=\left[\mathcal{D}^2, T\right]$ for any $T$ in $\mathcal{A}$ or $\left[\mathcal{D}, \mathcal{A}\right]$.

\end{defn}

%Consider spectral triple $\big( C^{\infty}(M) \otimes M_{N}(C), (D+\sqrt{-1}C(X)) \otimes Id_{N}, L^2(M, S(TM)) \otimes C^N \big)$ and $\big( C^{\infty}(M) \otimes M_{N}(C), L^2(M, S(TM)) \otimes C^N, D_T \otimes Id_{N}\big)$, $A \in \Omega^{1}\big( C^{\infty}(M) \otimes M_{N}(C) \big)$, $ A=A_1 d A_2 $, $A_1, A_2 \in M_{N}\big(C^{\infty}(M)\big) $, 

\section{The Dirac operator with inner fluctuations}
We give some definitions and basic notions which we will use in this paper.

Let $M$ be a smooth compact oriented Riemannian $n$-dimensional manifolds without boundary and $N$ be a vector bundle on $M$.
We say that $P$ is a differential operator of Laplace type, if it has locally the form
\begin{equation}\label{p}
	P=-(g^{ij}\partial_i\partial_j+A^i\partial_i+B),
\end{equation}
where $\partial_{i}$  is a natural local frame on $TM,$ $(g^{ij})_{1\leq i,j\leq n}$ is the inverse matrix associated to the metric
matrix  $(g_{ij})_{1\leq i,j\leq n}$ on $M,$ $A^{i}$ and $B$ are smooth sections of $\textrm{End}(N)$ on $M$ (endomorphism).
If $P$ satisfies the form \eqref{p}, then there is a unique
connection $\nabla$ on $N$ and a unique endomorphism $E$ such that
\begin{equation}
	P=-[g^{ij}(\nabla_{\partial_{i}}\nabla_{\partial_{j}}- \nabla_{\nabla^{L}_{\partial_{i}}\partial_{j}})+E],\nonumber
\end{equation}
where $\nabla^{L}$ is the Levi-Civita connection on $M$. Moreover
(with local frames of $T^{*}M$ and $N$), $\nabla_{\partial_{i}}=\partial_{i}+\omega_{i} $
and $E$ are related to $g^{ij}$, $A^{i}$ and $B$ through
\begin{eqnarray}
	&&\omega_{i}=\frac{1}{2}g_{ij}\big(A^{i}+g^{kl}\Gamma_{ kl}^{j} id\big),\nonumber\\
	&&E=B-g^{ij}\big(\partial_{i}(\omega_{j})+\omega_{i}\omega_{j}-\omega_{k}\Gamma_{ ij}^{k} \big),\nonumber
\end{eqnarray}
where $\Gamma_{ kl}^{j}$ is the  Christoffel coefficient of $\nabla^{L}$.

Now we let $M$ be a $n$-dimensional oriented spin manifold
with Riemannian metric $g$. The Dirac operator $D$ is
locally given as follows in terms of orthonormal frames $e_i,~1\leq
i\leq n$ and natural frames $\partial_i$ of $TM$, one has
\begin{align}
	D=\sum_{i,j}g^{ij}c(\partial_i)\nabla^S_{\partial_j}=\sum_{i}c(e_i)\nabla^S_{e_i},
\end{align}
where $c(e_i)$ denotes the Clifford action which satisfies the relation
$$c(e_i)c(e_j)+c(e_j)c(e_i)=-2\delta_i^j,$$ and
\begin{align}
	\nabla^S_{\partial_i}=\partial_i+\sigma_i,~~\sigma_i=\frac{1}{4}\sum_{j,k}\left<\nabla^L_{\partial_i}e_j,e_k\right>c(e_j)c(e_k).
\end{align}
Let
\begin{align}
	\partial^j=g^{ij}\partial_i,~~\sigma^i=g^{ij}\sigma_j,~~\Gamma^k=g^{ij}\Gamma_{ij}^k.
\end{align}
Recall the Lichnerowicz formula for the square of the Dirac operator, by (6a) in  \cite{Ka}, we have
\begin{align}
	D^2=-g^{ij}\partial_i\partial_j-2\sigma^j\partial_j+\Gamma^k\partial_k-g^{ij}[\partial_i(\sigma_j)+\sigma_i\sigma_j-\Gamma_{ij}^k\sigma_k]+\frac{1}{4}s,
\end{align}

we have respective symbols of $D^{2}$:

\begin{lem}\cite{Ka}\label{lemma1}
	\begin{align}
		\sigma_2(D^2)&=|\xi|^2,\\
		\sigma_1(D^2)&=\sqrt{-1}\sum_{\mu=1}^{n}(\Gamma^\mu-2\sigma^\mu)\xi_\mu,\\
		\sigma_0(D^2)&=-\sum_{\mu,\nu=1}^{n}g^{\mu\nu}(\partial^{x}_\mu\sigma_\nu+\sigma_\mu\sigma_\nu-\Gamma^\alpha_{\mu\nu}\sigma_\alpha)+\frac{1}{4}s,
	\end{align}
where $s$ is the scalar curvature.
\end{lem}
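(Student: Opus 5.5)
The plan is to read the three symbols off the local expression for $D^2$ displayed just before the lemma. To keep the argument self-contained, I would first rederive that expression from the Lichnerowicz formula $D^2=\Delta^S+\frac14 s$. Here $\Delta^S$ is the Bochner Laplacian of the spin connection:
\begin{align*}
\Delta^S=-g^{ij}\big(\nabla^S_{\partial_i}\nabla^S_{\partial_j}-\nabla^S_{\nabla^L_{\partial_i}\partial_j}\big).
\end{align*}
This is exactly the Laplace-type normal form recalled at the start of this section, with $E=-\frac14 s$.

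The next step is to substitute $\nabla^S_{\partial_i}=\partial_i+\sigma_i$ and $\nabla^L_{\partial_i}\partial_j=\Gamma^k_{ij}\partial_k$ and expand. The term $\nabla^S_{\partial_i}\nabla^S_{\partial_j}$ gives
\begin{align*}
\partial_i\partial_j+\sigma_i\partial_j+\sigma_j\partial_i+\partial_i(\sigma_j)+\sigma_i\sigma_j ,
\end{align*}
where $\partial_i(\sigma_j)$ denotes the derivative of the coefficient. The term $-\nabla^S_{\nabla^L_{\partial_i}\partial_j}$ gives $-\Gamma^k_{ij}(\partial_k+\sigma_k)$. Contracting with $-g^{ij}$ and using the symmetry of $g^{ij}$ to combine the two cross terms into $-2\sigma^j\partial_j$ reproduces the displayed formula
\begin{align*}
D^2=-g^{ij}\partial_i\partial_j-2\sigma^j\partial_j+\Gamma^k\partial_k-g^{ij}\big[\partial_i(\sigma_j)+\sigma_i\sigma_j-\Gamma^k_{ij}\sigma_k\big]+\frac14 s .
\end{align*}

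Finally, I would pass to symbols with the convention that $\partial_j$ has symbol $\sqrt{-1}\,\xi_j$, so a monomial $a(x)\partial^\alpha$ has symbol $a(x)(\sqrt{-1}\xi)^\alpha$. The three homogeneous pieces then separate by degree:
\begin{itemize}
\item \emph{Degree two.} The second-order part gives $-g^{ij}(\sqrt{-1})^2\xi_i\xi_j=|\xi|^2$.
\item \emph{Degree one.} The first-order part gives $\sqrt{-1}\,(\Gamma^\mu-2\sigma^\mu)\xi_\mu$.
\item \emph{Degree zero.} The zeroth-order part is the multiplication operator itself. Renaming indices, it is $-g^{\mu\nu}(\partial^x_\mu\sigma_\nu+\sigma_\mu\sigma_\nu-\Gamma^\alpha_{\mu\nu}\sigma_\alpha)+\frac14 s$.
\end{itemize}
These are the three formulas of the lemma.

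The only delicate point is bookkeeping of signs and conventions. One must check the sign of the $\Gamma^k\partial_k$ term, which comes from the $-\nabla^S_{\nabla^L_{\partial_i}\partial_j}$ correction. One must also fix the factor $\sqrt{-1}$ in the symbol convention and keep the $(\partial_i\sigma_j)$ term, coming from the Leibniz rule, separate from the first-order term. I would check the first two against the flat case, where $\sigma=\Gamma=0$ and the symbol must reduce to $|\xi|^2$. No analytic obstacle arises.
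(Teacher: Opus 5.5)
Your proposal is correct and follows the paper's route. The paper quotes the local Lichnerowicz expression (6a) of Kastler and reads off the three homogeneous pieces with the convention $\partial_j\mapsto\sqrt{-1}\,\xi_j$, which is consistent with the composition formula it uses later. Your expansion of the Bochner Laplacian only re-derives that quoted formula, and your sign and index bookkeeping (the $+\Gamma^k\partial_k$ term and the $-2\sigma^j\partial_j$ cross term) checks out.
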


\begin{lem}\cite{DL3}\label{lemma2}
 For $\mathbb{D}=D+\mathcal{B}$, The homogeneous symbols  $\sigma\left(|\mathbb{D}|^{k}\right)=\sigma_{k}(|\mathbb{D}|^{k})+\sigma_{k-1}(|\mathbb{D}|^{k})+\sigma_{k-2}(|\mathbb{D}|^{k})+\ldots $ read 
\begin{align}
\sigma_{k}(|\mathbb{D}|^{k}) & =|\xi|^{k-2}\sum_{a,b,c,d=1}^{n}\left(\delta_{a, b}+\frac{k}{6} R_{a c b d} x^{c} x^{d}\right) \xi_{a} \xi_{b}+o\left(\mathbf{x}^{2}\right),\\
\sigma_{k-1}(|\mathbb{D}|^{k}) & = \sqrt{-1} k|\xi|^{k-2}\sum_{a,b,j,k=1}^{n} \xi_{a} x^{b}\left(\frac{1}{3} \operatorname{Ric}_{a b}-\frac{1}{8} R_{a b j k}c(e_j)c(e_k)\right)\\
&+\frac{\sqrt{-1}}{2}k|\xi|^{k-2}\sum_{a=1}^{n} \xi_{a}\left\{c(e_a), \mathcal{B}\right\}+o(\mathbf{x}), \nonumber\\
\sigma_{k-2}(|\mathbb{D}|^{k}) & =\frac{k}{8}|\xi|^{k-2} R+\frac{k(k-2)}{12}|\xi|^{k-4}\sum_{a,b=1}^{n} \operatorname{Ric}_{a b} \xi_{a} \xi_{b}+\frac{k}{2}|\xi|^{k-2}\sum_{a=1}^{n}\left(c(e_a) \mathcal{B}_{a}+\mathcal{B}_{0}^{2}\right) \\
& +\frac{k(k-2)}{4}|\xi|^{k-4}\sum_{a,b=1}^{n} \xi_{a} \xi_{b}\left(\left\{c(e_a), \mathcal{B}_{b}\right\}-\frac{1}{2}\left\{c(e_a), \mathcal{B}_{0}\right\}\left\{c(e_b), \mathcal{B}_{0}\right\}\right)+o(\mathbf{1}),\nonumber
\end{align}
where the expansions in normal coordinates were used: $\mathcal{B}=\mathcal{B}_0+\mathcal{B}_a x^a+o(\mathbf{x})$.
\end{lem}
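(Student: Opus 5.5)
The plan is to compute the symbol of $\mathbb{D}^2$ first and then pass to $|\mathbb{D}|^k=(\mathbb{D}^2)^{k/2}$ by the holomorphic functional calculus at the level of symbols. Everything is done in Riemannian normal coordinates centered at a fixed point $x_0$, with a synchronous (radially parallel) orthonormal frame $e_a$. Only $\sigma_k$ up to $o(\mathbf{x}^2)$, $\sigma_{k-1}$ up to $o(\mathbf{x})$ and $\sigma_{k-2}$ at $x_0$ are needed, so each ingredient is Taylor-expanded only to the order it contributes.

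\textbf{Step 1: symbol of $\mathbb{D}^2$.} Write
\[
\mathbb{D}^2=D^2+\{D,\mathcal{B}\}+\mathcal{B}^2 .
\]
For the first term I use Lemma \ref{lemma1} with the standard normal-coordinate expansions:
\[
g^{ij}=\delta_{ij}+\tfrac13 R_{iajb}x^ax^b+o(\mathbf{x}^2),\qquad \Gamma^k=\tfrac23\operatorname{Ric}_{kb}x^b+o(\mathbf{x}),\qquad \sigma_i=\tfrac18 R_{ijab}x^j c(e_a)c(e_b)+o(\mathbf{x}).
\]
Signs are to be fixed with the paper's curvature convention. These give
\[
\sigma_2(\mathbb{D}^2)=|\xi|^2+\tfrac13R_{acbd}x^cx^d\xi_a\xi_b+o(\mathbf{x}^2),
\]
and $\sigma_1(D^2)$ is linear in $x$ with the Ricci and $R_{abjk}c(e_j)c(e_k)$ coefficients. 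At $x_0$ one has $\sigma_0(D^2)=\frac14 s$, since the $\partial\sigma$ term is traceless in $g^{\mu\nu}$ by the Bianchi identity. For $\{D,\mathcal{B}\}=\sum_a\{c(e_a)\nabla_{e_a},\mathcal{B}\}$ the symbol is
\[
\sqrt{-1}\,\xi_a\{c(e_a),\mathcal{B}\}+\sum_a c(e_a)\mathcal{B}_a+o(\mathbf{1}),
\]
obtained by inserting $\mathcal{B}=\mathcal{B}_0+\mathcal{B}_ax^a+o(\mathbf{x})$. Finally $\mathcal{B}^2$ contributes $\mathcal{B}_0^2$ at order $0$.

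\textbf{Step 2: complex powers.} Next I build the parametrix of $(\mathbb{D}^2-\lambda)^{-1}$ with symbols $r_{-2}=(\sigma_2-\lambda)^{-1}$, $r_{-3}$, $r_{-4}$ from the composition formula. Then I set
\[
\sigma(|\mathbb{D}|^k)=\frac{\sqrt{-1}}{2\pi}\int_\Gamma \lambda^{k/2}\,r(\lambda)\,d\lambda .
\]
Since $\sigma_2$ is scalar, the leading term is simply $(\sigma_2)^{k/2}$. Expanding $(|\xi|^2+\tfrac13 R\,xx\,\xi\xi)^{k/2}$ to second order in $x$ produces the $\frac{k}{6}$ factor in $\sigma_k$. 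At order $k-1$, the only contribution to first order in $x$ is $\frac k2|\xi|^{k-2}\sigma_1(\mathbb{D}^2)$. The $\partial_\xi\sigma_2\,\partial_x\sigma_2$ correction is $O(x^2)\cdot\xi$ here, hence negligible to this order. This yields the Ricci, curvature and $\{c(e_a),\mathcal{B}\}$ terms in $\sigma_{k-1}$.

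At order $k-2$ and at $x_0$, the contributions come from four sources:
\begin{itemize}
\item $\frac k2|\xi|^{k-2}\sigma_0(\mathbb{D}^2)$, giving the $R$, $c(e_a)\mathcal{B}_a$ and $\mathcal{B}_0^2$ terms;
\item $\frac{k(k-2)}{8}|\xi|^{k-4}\sigma_1^2$, where only the $\mathcal{B}_0$ part survives at $x_0$, giving the $-\frac12\{c,\mathcal{B}_0\}\{c,\mathcal{B}_0\}$ term;
\item $\partial_\xi\sigma_2\,\partial_x\sigma_1$ terms, giving $\{c(e_a),\mathcal{B}_b\}$ and $\operatorname{Ric}$;
\item $\partial_\xi^2\sigma_2\,\partial_x^2\sigma_2$ terms, giving the remaining $\operatorname{Ric}_{ab}\xi_a\xi_b$.
\end{itemize}
The $\lambda$-contour integrals reduce to the scalar identities
\[
\frac{\sqrt{-1}}{2\pi}\int_\Gamma\lambda^{k/2}(|\xi|^2-\lambda)^{-m}\,d\lambda=\binom{k/2}{m-1}|\xi|^{k-2m+2}.
\]
These identities are what produce the coefficients $\frac k2$, $\frac{k(k-2)}{12}$ and $\frac{k(k-2)}{4}$.

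\textbf{Main obstacle.} The main difficulty is the bookkeeping at order $k-2$. The Clifford-valued $\sigma_1$ does not commute with itself across different $\xi$-directions, so $r_{-4}$ must be kept in ordered form, as $(\sigma_2-\lambda)^{-1}\sigma_1(\sigma_2-\lambda)^{-1}\sigma_1(\sigma_2-\lambda)^{-1}$, rather than written as a square. On top of this, the derivative terms generate several Ricci contributions that must combine into exactly $\frac k8 R$ and $\frac{k(k-2)}{12}\operatorname{Ric}_{ab}\xi_a\xi_b$. I would first check the final formula against the flat case with $\mathcal{B}$ constant, where $|\mathbb{D}|^k$ can be computed directly, and against $k=2$, where it must reproduce the symbol of $\mathbb{D}^2$ from Step 1. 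These two checks fix the signs and the coefficients.
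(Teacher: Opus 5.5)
The paper gives no proof of this lemma. It is quoted from \cite{DL3}, and only its cases $k=-1,-3,-5$ are used, in Lemmas \ref{lemma3.4} and \ref{lemma4.2}. Your Seeley resolvent calculus is therefore an independent route, and it does reproduce the statement. Because $\sigma_2(\mathbb{D}^2)$ is scalar, $r_{-2}$ is a scalar at every $(x,\xi)$. Carrying your recursion through, with $\sigma_i:=\sigma_i(\mathbb{D}^2)$ and everything evaluated at $x_0$, gives
\[
\sigma_{k-2}(|\mathbb{D}|^{k})(x_0)=\frac{k}{2}|\xi|^{k-2}\sigma_0+\frac{k(k-2)}{8}|\xi|^{k-4}\Big(\sigma_1^2-2\sqrt{-1}\sum_{j}\xi_j\partial_{x_j}\sigma_1-\sum_{j}\partial_{x_j}^2\sigma_2\Big).
\]
This agrees with direct composition for $k=2$ and $k=4$, and it yields every coefficient of the lemma:
\begin{itemize}
\item $\sigma_0(x_0)=\frac14 s+\sum_a c(e_a)\mathcal{B}_a+\mathcal{B}_0^2$ gives the $\frac k8 R$ and $\frac k2(\cdot)$ terms.
\item $\sigma_1(x_0)^2=-\sum_{a,b}\xi_a\xi_b\{c(e_a),\mathcal{B}_0\}\{c(e_b),\mathcal{B}_0\}$ gives the $-\frac12$ term.
\item $-2\sqrt{-1}\xi_j\partial_{x_j}\sigma_1$ gives $2\sum_{a,b}\xi_a\xi_b\{c(e_a),\mathcal{B}_b\}+\frac43\operatorname{Ric}_{ab}\xi_a\xi_b$, using $\partial_{x_j}\Gamma^\mu(x_0)=\frac23\operatorname{Ric}_{\mu j}$.
\item $-\Delta_x\sigma_2$ gives $-\frac23\operatorname{Ric}_{ab}\xi_a\xi_b$.
\end{itemize}
The net Ricci coefficient is $\frac23\cdot\frac{k(k-2)}{8}=\frac{k(k-2)}{12}$. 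Compared with the bare citation, your derivation makes the lemma self-contained and exposes its implicit hypotheses.

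Some details need correcting.

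\textbf{The contour identity has the wrong sign.} Differentiating the $m=1$ case in $|\xi|^2$ gives
\[
\frac{\sqrt{-1}}{2\pi}\int_\Gamma\lambda^{k/2}(|\xi|^2-\lambda)^{-m}\,d\lambda=(-1)^{m-1}\binom{k/2}{m-1}|\xi|^{k-2m+2}.
\]
With your version, $r_{-3}=-r_{-2}^2\sigma_1+\cdots$ would give $\sigma_{k-1}=-\frac k2|\xi|^{k-2}\sigma_1$. That already fails your own $k=2$ test.

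\textbf{Your ``main obstacle'' is not an obstacle.} All symbols are evaluated at the same $(x_0,\xi)$ and $r_{-2}$ is scalar there, so $r_{-2}\sigma_1r_{-2}\sigma_1r_{-2}=r_{-2}^3\sigma_1^2$ exactly. Clifford noncommutativity enters only inside $\sigma_1^2$, where the factor $\xi_a\xi_b$ symmetrizes it.

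\textbf{The scalar curvature has a single source.} It comes only from $\sigma_0(D^2)(x_0)=\frac14 s$. The derivative terms produce only $\operatorname{Ric}_{ab}\xi_a\xi_b$. The spin-connection part of $\xi_j\partial_{x_j}\sigma_1$ drops out because $R_{j\mu ts}\xi_j\xi_\mu=0$.

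\textbf{Antisymmetry, not the Bianchi identity, does the work.} Both $\sum_\mu\partial_{x_\mu}\sigma_\mu(x_0)=0$ and the vanishing of the $O(\mathbf{x})$ part of $\partial_\xi\sigma_2\,\partial_x\sigma_2$ follow from the antisymmetry of $R$ within an index pair.

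\textbf{State the self-adjointness hypothesis.} You use $|\mathbb{D}|^k=(\mathbb{D}^2)^{k/2}$, which requires $\mathcal{B}$ to be self-adjoint. This holds for both $\sqrt{-1}c(X)$ and $fc(e_1)c(e_2)c(e_3)$.

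\textbf{Placement of $\mathcal{B}_0^2$.} Your computation puts $\mathcal{B}_0^2$ outside the sum over $a$. That is the correct reading of the statement, as written it would be counted $n$ times.
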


Let $X$ be a
vector field on $M$, and we also denote the associated
Clifford action by $\sqrt{-1}c(X)$. For Dirac operator with inner fluctuations $\tilde{D} :=D+\sqrt{-1}c(X)$, we have respective symbols of $\tilde{D}^{2}$:
\begin{lem}\label{lemma3.3}

	\begin{align}
		\sigma_1(\tilde{D}^2)=&\sqrt{-1}\sum_{\mu=1}^{n}(\Gamma^\mu-2\sigma^\mu)\xi_\mu+2\sum_{\mu=1}^{n}g(X,e_{\mu})(x)\xi_\mu,\label{lem3.3.1}\\
		\sigma_0(\tilde{D}^2)=&-\sum_{\mu,\nu=1}^{n}g^{\mu\nu}(\partial^{x}_\mu\sigma_\nu+\sigma_\mu\sigma_\nu-\Gamma^\alpha_{\mu\nu}\sigma_\alpha+\sqrt{-1}c(X)c(e_\nu) \sigma_\nu+\sqrt{-1} c(e_\nu)\sigma_\nu c(X))\nonumber\\
		&+\sqrt{-1}\sum_{\mu,\gamma=1}^{n}\partial^{x}_\mu(X_\gamma)c(e_\mu)c(e_\gamma) -c^2(X)+\frac{1}{4}s.\label{lem3.3.2}
	\end{align}
\end{lem}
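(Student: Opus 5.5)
Lemma \ref{lemma3.3} is a direct symbol computation. I will expand $\tilde{D}^2=(D+\sqrt{-1}c(X))^2$ and read off its homogeneous pieces against Lemma \ref{lemma1}. Write
\begin{align}
\tilde{D}^2=D^2+\sqrt{-1}\big(Dc(X)+c(X)D\big)-c(X)^2 .\nonumber
\end{align}
The first term contributes exactly $\sigma_1(D^2)$ and $\sigma_0(D^2)$ from Lemma \ref{lemma1}. The last term is an endomorphism, so it contributes $-c^2(X)$ to $\sigma_0$ only. All the new work is in the anticommutator $\{D,c(X)\}$.

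For the anticommutator I use the local form $D=\sum_{\nu}c(e_\nu)\nabla^S_{e_\nu}$. Working in coordinates where $\partial_\nu$ and $e_\nu$ agree at the point, as is implicit in Lemma \ref{lemma1}, this becomes $D=\sum_\nu c(e_\nu)(\partial_\nu+\sigma_\nu)$. Write $c(X)=\sum_\gamma X_\gamma c(e_\gamma)$. Applying $D$ to $c(X)s$ and commuting the derivative past $c(X)$ gives
\begin{align}
Dc(X)+c(X)D=\sum_{\nu}\big(c(e_\nu)c(X)+c(X)c(e_\nu)\big)\partial_\nu+\sum_{\nu,\gamma}\partial_\nu(X_\gamma)c(e_\nu)c(e_\gamma)+\sum_\nu\big(c(e_\nu)\sigma_\nu c(X)+c(X)c(e_\nu)\sigma_\nu\big).\nonumber
\end{align}
Here I must account for the $x$-dependence of $c(e_\gamma)$ in the natural frame. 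I will absorb it either by taking the frame so that these derivatives vanish at the point, or by folding it into the $\sigma_\nu$ terms.

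Next I convert to symbols. The Clifford relation gives $c(e_\nu)c(X)+c(X)c(e_\nu)=-2g(X,e_\nu)$, and $\partial_\nu$ has symbol $\sqrt{-1}\xi_\nu$. The first-order part of $\sqrt{-1}\{D,c(X)\}$ is therefore $\sqrt{-1}\cdot(-2g(X,e_\nu))\cdot\sqrt{-1}\xi_\nu=2g(X,e_\nu)\xi_\nu$. Adding this to $\sigma_1(D^2)$ gives \eqref{lem3.3.1}.

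The zeroth-order part of $\sqrt{-1}\{D,c(X)\}$ is
\begin{align}
\sqrt{-1}\sum\partial_\mu(X_\gamma)c(e_\mu)c(e_\gamma)+\sqrt{-1}\sum\big(c(X)c(e_\nu)\sigma_\nu+c(e_\nu)\sigma_\nu c(X)\big).\nonumber
\end{align}
Writing the spin-connection sum with $g^{\mu\nu}$, to match the convention of Lemma \ref{lemma1}, puts it inside the bracket with a minus sign. Combining with $\sigma_0(D^2)$ and $-c^2(X)$ then yields \eqref{lem3.3.2}.

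The main obstacle is bookkeeping rather than ideas. I need the signs in the $\sigma_\nu$ terms to match the paper's displayed convention, since they appear inside $-g^{\mu\nu}(\cdots)$. I also need to handle the derivative of the frame $e_\gamma$ hitting $c(X)$ consistently. I would check this at a point using normal coordinates, where the $\Gamma$ terms vanish, and verify that the $\sigma$ placement agrees.
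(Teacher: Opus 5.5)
Your route is the paper's: expand $\tilde{D}^2=D^2+\sqrt{-1}(Dc(X)+c(X)D)-c^2(X)$ and read off homogeneous parts. Your Leibniz-rule computation of $\{D,c(X)\}$ is just the composition formula $\sigma(PQ)=\sum_\alpha\frac{(-\sqrt{-1})^{|\alpha|}}{\alpha!}\partial_\xi^\alpha\sigma(P)\,\partial_x^\alpha\sigma(Q)$ written out for first-order operators. Your first-order term $2\sum_\mu g(X,e_\mu)\xi_\mu$, the term $\sqrt{-1}\sum_{\mu,\gamma}\partial_{x_\mu}(X_\gamma)c(e_\mu)c(e_\gamma)$ and $-c^2(X)$ all agree with the paper. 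A small clarification: in the spinor trivialization induced by the orthonormal frame (the one in which $\nabla^S_{\partial_i}=\partial_i+\sigma_i$ has the stated form), the matrices $c(e_\gamma)$ are constant. So there is no derivative of $c(e_\gamma)$ to absorb, and ``folding it into $\sigma_\nu$'' is neither needed nor meaningful.

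The step that fails is the final matching. Since $\sigma_0(D)=\sum_{i,j}g^{ij}c(\partial_i)\sigma_j=\sum_\nu c(e_\nu)\sigma_\nu$, your computation correctly produces the spin-connection terms with a plus sign, $+\sqrt{-1}\sum_\nu\big(c(X)c(e_\nu)\sigma_\nu+c(e_\nu)\sigma_\nu c(X)\big)$. Placing them inside $-\sum_{\mu,\nu}g^{\mu\nu}(\cdots)$ reverses that sign, and no rewriting with $g^{\mu\nu}$ can achieve this: where $g^{\mu\nu}=\delta^{\mu\nu}$ the double sum simply collapses to $-\sum_\nu$. So, read as a formula on a neighbourhood, \eqref{lem3.3.2} is off by a sign in those two terms, and your argument cannot honestly match it. The paper's own derivation makes the same slip when it moves $\sqrt{-1}\sigma_0(D)c(X)+\sqrt{-1}c(X)\sigma_0(D)$ into that bracket. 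The lemma survives because it is only ever used at the centre $x_0$ of normal coordinates with a radially parallel frame. There $\sigma_\nu(x_0)=0$, $\Gamma^\mu(x_0)=0$, $g^{\mu\nu}(x_0)=\delta^{\mu\nu}$ and $e_\mu(x_0)=\partial_\mu$, so the disputed terms vanish. This same normalization is what makes $2\sum_\mu g(X,e_\mu)\xi_\mu$ equal to the invariant $2\xi(X)$. You should therefore state your conclusion either with the plus sign you derived or evaluated at $x_0$, rather than asserting a sign change.
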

\begin{proof}
Recall the Lichnerowicz formula for the square of the Dirac operator, by (6a) in  \cite{Ka}, we have
	\begin{align}
	D^2=-g^{ij}\partial_i\partial_j-2\sigma^j\partial_j+\Gamma^k\partial_k-g^{ij}[\partial_i(\sigma_j)+\sigma_i\sigma_j-\Gamma_{ij}^k\sigma_k]+\frac{1}{4}s,
\end{align}
where $s$ is the scalar curvature. 

For Dirac operator with inner fluctuations $\tilde{D} :=D+\sqrt{-1}c(X)$, then
\begin{align}\label{sigma0}
	\tilde{D}^2=D^2+\sqrt{-1}Dc(X)+\sqrt{-1}c(X) D-c^2(X).
\end{align}
Using lemma \ref{lemma1} and $\sigma(P_1 P_2)=\sum_{|\alpha|=0}^\infty\frac{(-i)^{|\alpha|}}{\alpha!}\partial^\alpha_\xi[\sigma(P_1)]\partial^\alpha_x[\sigma(P_2)]$, we compute
\begin{align}
	\sigma_1(\tilde{D}^2)=&\sigma_1(D^2)+\sqrt{-1}\sigma_1(D)c(X)+\sqrt{-1}c(X)\sigma_1(D)\nonumber\\
	=&\sqrt{-1}\sum_{\mu=1}^{n}(\Gamma^\mu-2\sigma^\mu)\xi_\mu+2\sum_{\mu=1}^{n}g(X,e_{\mu})(x)\xi_\mu,\\
	\sigma_0(\tilde{D}^2)=&\sigma_0(D^2)+\sqrt{-1}\sigma_0(D)c(X)+\sqrt{-1}c(X)\sigma_0(D)\nonumber\\
	&+(-\sqrt{-1})\sum_{\mu=1}^{n}\partial_{\xi_\mu}\sigma_1(D)\partial_{x_\mu}(\sqrt{-1}c(X))-c^2(X)\nonumber\\
	=&-\sum_{\mu,\nu=1}^{n}g^{\mu\nu}(\partial^{x}_\mu\sigma_\nu+\sigma_\mu\sigma_\nu-\Gamma^\alpha_{\mu\nu}\sigma_\alpha+\sqrt{-1}c(X)c(e_\nu) \sigma_\nu+\sqrt{-1} c(e_\nu)\sigma_\nu c(X))\nonumber\\
	&+\sqrt{-1}\sum_{\mu,\gamma=1}^{n}\partial^{x}_\mu(X_\gamma)c(e_\mu)c(e_\gamma) -c^2(X)+\frac{1}{4}s.
\end{align}

\end{proof}

\begin{lem}\label{lemma3.4}
	In normal coordinates around a fixed point of the manifold $M$, the symbols representation of the inverse of the Dirac operator with inner fluctuations read:
	\begin{align} 
		\sigma_{-3}(|\tilde{D}|^{-1}) 
		=&-\frac{1}{8}|\xi|^{-3} R +\frac{1}{4}|\xi|^{-5}\sum_{a,b=1}^{n}  \operatorname{Ric}_{a b}  \xi_{a} \xi_{b}\nonumber\\
		&-\frac{\sqrt{-1}}{2}|\xi|^{-3}\sum_{a,l=1}^{n}\partial_{x_a}(X_l) c(e_a) c(e_l)+\frac{1}{2}|\xi|^{-3}c^{2}(X) \nonumber\\
		&-\frac{3}{2}\sqrt{-1}|\xi|^{-5} \sum_{a,b=1}^{n}\sqrt{-1}\partial_{x_b}(X_a)\xi_{a} \xi_{b}+\frac{3}{2}|\xi|^{-5}\sum_{a,b=1}^{n}g(X,e_a)g(X,e_b)\xi_{a} \xi_{b}++o(\mathbf{1}),\label{lem3.4.1}\\
		\sigma_{-3}(|\tilde{D}|^{-3})=&|\xi|^{-5}\sum_{a,b,c,d=1}^{n} \left(\delta_{a b}-\frac{1}{2} R_{a c b d} x^{c} x^{d}\right) \xi_{a} \xi_{b}+O\left(\mathbf{x}^{2}\right),\label{lem3.4.2} \\
		\sigma_{-4}(|\tilde{D}|^{-3})=&-\sqrt{-1}|\xi|^{-5}\sum_{a,b=1}^{n}  \operatorname{Ric}_{a b} x^{b} \xi_{a}\label{lem3.4.3}\nonumber\\
		&+\frac{ 3 \sqrt{-1}}{8}|\xi|^{-5}\sum_{a,b,j,k=1}^{n} \operatorname{R}_{b a j k}(x_0)c(e_j)c(e_k) x^{b} \xi_{a}\nonumber\\
		&+\frac{3}{2}|\xi|^{-5}\sum_{a=1}^{n}\{c(e_a),c(X)\} \xi_a +O\left(\mathbf{x}\right),\\
		\sigma_{-5}(|\tilde{D}|^{-5})=&|\xi|^{-7}\sum_{a,b,c,d=1}^{n} \left(\delta_{a b}-\frac{5}{6} R_{a c b d} x^{c} x^{d}\right) \xi_{a} \xi_{b}+O\left(\mathbf{x}^{2}\right),\label{lem3.4.4}
	\end{align}
	where ${\rm R}_{a c b d}$ and $\operatorname{Ric}_{a b}$ are the components of the Riemann and Ricci tensor, s is the scalar curvature.
\end{lem}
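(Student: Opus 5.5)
The plan is to read off all four formulas from Lemma \ref{lemma2}, applied with the perturbation $\mathcal{B}=\sqrt{-1}c(X)$ and with $n=3$, by specialising the exponent to $k=-1,-3,-5$. In normal coordinates at the fixed point $x_0$ we write $\mathcal{B}=\mathcal{B}_0+\mathcal{B}_a x^a+o(\mathbf{x})$. Then
\begin{align}
	\mathcal{B}_0=\sqrt{-1}c(X)(x_0),\qquad \mathcal{B}_a=\sqrt{-1}\sum_{l=1}^{n}\partial_{x_a}(X_l)c(e_l),
\end{align}
where $X=\sum_l X_l e_l$. This uses the fact that in normal coordinates the orthonormal frame agrees with $\partial_l$ to first order, so $\partial_{x_a}c(e_l)(x_0)=0$. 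The principal symbols \eqref{lem3.4.2} and \eqref{lem3.4.4} then follow at once from the first formula of Lemma \ref{lemma2} with $k=-3$ and $k=-5$: the coefficients become $\frac{k}{6}=-\frac12$ and $-\frac56$, and the prefactor $|\xi|^{k-2}$ becomes $|\xi|^{-5}$ and $|\xi|^{-7}$. The perturbation does not enter these formulas at all.

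For \eqref{lem3.4.3} we use the second formula of Lemma \ref{lemma2} with $k=-3$. The Ricci term gives $-3\cdot\frac13=-1$, which accounts for the first line. The curvature-Clifford term gives $-3\cdot(-\frac18)=\frac38$, with the index order adjusted to $R_{baj k}$; this is antisymmetric in the first pair, so it only produces a sign convention that I will reconcile. The perturbation term is
\begin{align}
	\frac{\sqrt{-1}}{2}\cdot(-3)|\xi|^{-5}\sum_a\xi_a\{c(e_a),\sqrt{-1}c(X)\}=\frac32|\xi|^{-5}\sum_a\{c(e_a),c(X)\}\xi_a .
\end{align}
Using $\{c(e_a),c(X)\}=-2g(X,e_a)$, this term could also be written scalar-valued.

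The main work is \eqref{lem3.4.1}, which comes from the third formula of Lemma \ref{lemma2} with $k=-1$. There $k(k-2)=3$, so the curvature part gives $-\frac18|\xi|^{-3}R+\frac14|\xi|^{-5}\mathrm{Ric}_{ab}\xi_a\xi_b$. The term $\frac{k}{2}|\xi|^{-3}\big(\sum_a c(e_a)\mathcal{B}_a+\mathcal{B}_0^2\big)$ gives $-\frac{\sqrt{-1}}{2}|\xi|^{-3}\sum_{a,l}\partial_{x_a}(X_l)c(e_a)c(e_l)+\frac12|\xi|^{-3}c^2(X)$, because $\mathcal{B}_0^2=-c^2(X)$. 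The last bracket, with coefficient $\frac34|\xi|^{-5}$, needs the Clifford identities
\begin{align}
	\{c(e_a),\mathcal{B}_b\}=-2\sqrt{-1}\partial_{x_b}(X_a),\qquad \{c(e_a),\mathcal{B}_0\}=-2\sqrt{-1}g(X,e_a).
\end{align}
Substituting these, the bracket becomes $-2\sqrt{-1}\,\partial_{x_b}X_a+2g(X,e_a)g(X,e_b)$. After multiplying by $\frac34$ and comparing with the stated form (the product $-\frac32\sqrt{-1}\cdot\sqrt{-1}$), this yields the last two terms.

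I expect the main obstacle to be bookkeeping of factors of $\sqrt{-1}$ and signs in these anticommutators. They involve the convention $c(e_i)^2=-1$ and the symmetrisation in $a,b$ that is implicit because of the factor $\xi_a\xi_b$. I would verify them directly by expanding in the orthonormal frame.

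Finally, although Lemma \ref{lemma3.3} is not needed once Lemma \ref{lemma2} is granted, I would use it to cross-check consistency. The first-order symbol $2\sum_\mu g(X,e_\mu)\xi_\mu$ should match $\frac{\sqrt{-1}}{2}\cdot 2\sum\xi_a\{c(e_a),\mathcal{B}\}$ for $k=2$, and this fixes the sign conventions used above.
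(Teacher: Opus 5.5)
Your route is the paper's own. The paper gives no separate argument for Lemma \ref{lemma3.4}: it is Lemma \ref{lemma2} specialised to $\mathcal{B}=\sqrt{-1}c(X)$ and $k=-1,-3,-5$. Your values of $\mathcal{B}_0$, $\mathcal{B}_a$ and of all the anticommutators are correct.

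The problem is the last sentence of your treatment of \eqref{lem3.4.1}, which asserts an agreement that does not hold. Your bracket multiplied by $\frac34$ gives
\begin{align*}
-\frac32\sqrt{-1}\,|\xi|^{-5}\sum_{a,b=1}^{n}\partial_{x_b}(X_a)\xi_a\xi_b .
\end{align*}
The displayed term $-\frac32\sqrt{-1}|\xi|^{-5}\sum_{a,b}\sqrt{-1}\,\partial_{x_b}(X_a)\xi_a\xi_b$ equals $+\frac32|\xi|^{-5}\sum_{a,b}\partial_{x_b}(X_a)\xi_a\xi_b$. The two differ by a factor $\sqrt{-1}$, and no bookkeeping of conventions removes it.

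Here your computation is right and the statement is not. In flat space with $X$ affine one has exactly $\sigma(\tilde D^2)=|\xi|^2+2\sum_a X_a\xi_a+\sum_a c(e_a)\mathcal{B}_a+|X|^2$. The resolvent parametrix then gives the coefficient $-\frac{\sqrt{-1}}{2}k(k-2)$ in front of $|\xi|^{k-4}\sum_{a,b}\partial_{x_b}(X_a)\xi_a\xi_b$ in $\sigma_{k-2}(|\tilde D|^{k})$, i.e.\ $-\frac32\sqrt{-1}$ for $k=-1$. The paper itself uses this single-$\sqrt{-1}$ version in Part i).

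So you should say explicitly that you prove \eqref{lem3.4.1} with that term corrected, since the doubled $\sqrt{-1}$ is a misprint. The correction is harmless for Theorem \ref{thm1}, because there the term is traced against the single Clifford factor $c(da^1)$ and drops out. Do not claim the displayed formula, which as written cannot be proved.

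Likewise, the curvature--Clifford term in \eqref{lem3.4.3} leaves no sign convention to reconcile. Lemma \ref{lemma2} with $k=-3$ gives
\begin{align*}
+\frac{3\sqrt{-1}}{8}|\xi|^{-5}\sum R_{abjk}c(e_j)c(e_k)x^b\xi_a=-\frac{3\sqrt{-1}}{8}|\xi|^{-5}\sum R_{bajk}c(e_j)c(e_k)x^b\xi_a ,
\end{align*}
which is the opposite of the displayed sign, with the same tensor $R$ in both lemmas. The correct justification is that this term is absorbed by the stated remainder $O(\mathbf{x})$. The same holds for the Ricci term there and for the $R_{acbd}x^cx^d$ terms in \eqref{lem3.4.2} and \eqref{lem3.4.4}, which fall into $O(\mathbf{x}^2)$. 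If that remainder is meant as $o(\mathbf{x})$, as in Lemma \ref{lemma2}, then the statement has a second misprint there. It is again harmless, since only the value at $x_0$, where the term vanishes, is used in (ii-2).
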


%\begin{lem}
%The homogeneous symbols $ \sigma\left(\left[|\tilde{D}|^{k}, v_{b} c(e_b)\right]\right)=\sigma_{k}+\sigma_{k-1}+\sigma_{k-2}+\ldots $  read

%\begin{align}
%	\sigma_{k} =& o\left(\mathbf{x}^{2}\right) \\
%	\sigma_{k-1} =&\; \sqrt{-1} \frac{k}{2}|\xi|^{k-2} \xi_{c} v_{b} x^{d} R_{c d j b} c(e_j)-\sqrt{-1} k \xi_{c}|\xi|^{k-2} v_{b c} c(e_b)-\frac{k}{2}|\xi|^{k-2} v_{b}\left[\left\{c(e_c), B\right\}, c(e_b)\right]+o(\mathbf{x}), \\
%\sigma_{k-2}= & -k v_{b c c} c(e_b)|\xi|^{k-2}-k(k-2) \xi_{c} \xi_{d}|\xi|^{k-4} v_{b c d} c(e_b)+\frac{k}{2}|\xi|^{k-2} v_{b}\left[\sqrt{-1} c(e_c) B_{c}+B_{0}^{2}, c(e_b)\right] \nonumber\\
%	& +\frac{k(k-2)}{4}|\xi|^{k-4} \xi_{c} \xi_{d} v_{b}\left[\sqrt{-1}\left\{c(e_c), B_{d}\right\}+\frac{1}{2}\left\{c(e_c), B_{0}\right\}\left\{c(e_d), B_{0}\right\}, c(e_b)\right]\nonumber \\
%	& +\sqrt{-1} \frac{k}{2}|\xi|^{k-2}\left\{c(e_c), B_{0}\right\} v_{b c} c(e_b)+\sqrt{-1} \frac{k(k-2)}{2}|\xi|^{k-4} \xi_{c} \xi_{d}\left\{c(e_c), B_{0}\right\} v_{b d} c(e_b)+o(\mathbf{1}),
%\end{align}
%\end{lem}

\begin{lem}\label{lemma3.5}
	
	  \begin{align} 
		\sigma_{1} \bigg(	\nabla\left(\left[\tilde{D}, a^{1}\right]\right)\bigg)(x_0)=&\;-2\sqrt{-1}\sum_{j,\gamma=1}^{n}\partial_{x_j}[e_\gamma(a^1)]c(e_\gamma)\xi_j,\label{lem3.5.1}\\
		\sigma_{0} \bigg(	\nabla\left(\left[\tilde{D}, a^{1}\right]\right)\bigg)(x_0)=&\;\sqrt{-1}\sum_{j,\gamma=1}^{n}\partial_{x_j}(X_\gamma)[c(e_j)c(e_\gamma)c(da^1)-c(da^1)c(e_j)c(e_\gamma)]\label{lem3.5.2}\\
		&-2\sqrt{-1}\sum_{j,\gamma=1}^{n}g(e_j,X)\partial_{x_j}[e_\gamma(a^1)]c(e_\gamma)-\sum_{j,\gamma=1}^{n}\partial_{x_j}^{2}[e_\gamma(a^1)]c(e_\gamma),\nonumber\\
		\sigma_{2} \left(	\nabla^2\left(\left[\tilde{D}, a^{1}\right]\right)\right)(x_0)=
		&\;\frac{1}{2}\sum_{i,j,s,t,l=1}^{n}R_{jlst}(x_0)c(e_s)c(e_t)c(da^1)\xi_j\xi_l\label{lem3.5.3}\\
		&-\frac{1}{2}\sum_{i,j,s,t,l=1}^{n}R_{jlst}(x_0)c(da^1)c(e_s)c(e_t)\xi_j\xi_l\nonumber\\
		&-4\sum_{j,l,\gamma=1}^{n}\partial_{x_j}\partial_{x_l}[e_\gamma(a^1)](x_0)c(e_\gamma)\xi_j\xi_l.\nonumber
	\end{align}  	
\end{lem}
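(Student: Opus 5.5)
We will compute the symbols of $\nabla([\tilde D,a^1])=[\tilde D^2,[\tilde D,a^1]]$ and of $\nabla^2([\tilde D,a^1])$ directly from the composition formula
$$\sigma(P_1P_2)=\sum_{\alpha}\frac{(-\sqrt{-1})^{|\alpha|}}{\alpha!}\partial_\xi^\alpha\sigma(P_1)\,\partial_x^\alpha\sigma(P_2),$$
working in normal coordinates centred at $x_0$. There $g_{ij}(x_0)=\delta_{ij}$, $\Gamma^k(x_0)=0$ and $\sigma_i(x_0)=0$. Moreover, $\partial_x g_{ij}(x_0)=0$, and the first derivatives of $\sigma_i$ are given by curvature, $\partial_{x_j}\sigma_i(x_0)=\frac{1}{8}\sum_{s,t}R_{jist}(x_0)c(e_s)c(e_t)$ up to sign convention.

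The first step is to note that $[\tilde D,a^1]=[D,a^1]=c(da^1)=\sum_\gamma e_\gamma(a^1)c(e_\gamma)$. The perturbation $\sqrt{-1}c(X)$ is a zeroth-order operator and drops out of this commutator, because $a^1$ is a scalar matrix-valued function acting on $\mathbb{C}^N$ and commutes with Clifford multiplication. So $[\tilde D,a^1]$ is an order-zero operator with symbol $T(x)=c(da^1)(x)$, and it has no $\xi$-dependence.

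Next, for $\nabla(T)=\tilde D^2T-T\tilde D^2$ we use the full symbol $|\xi|^2+\sigma_1(\tilde D^2)+\sigma_0(\tilde D^2)$ from Lemma \ref{lemma3.3}. Since $T$ is independent of $\xi$, the product $T\tilde D^2$ has symbol equal to the plain product. The product $\tilde D^2T$ picks up derivative terms from the composition formula:
\begin{itemize}
\item $\alpha$ with $|\alpha|=1$ gives $-\sqrt{-1}\,\partial_{\xi_j}(|\xi|^2+\sigma_1)\,\partial_{x_j}T$;
\item $|\alpha|=2$ gives $-\partial_{x_j}^2T$.
\end{itemize}
Collecting by order:
\begin{itemize}
\item Order $2$: the commutator $[|\xi|^2,T]$ vanishes.
\item Order $1$: the term $-2\sqrt{-1}\xi_j\partial_{x_j}T$ survives, together with $[\sigma_1(\tilde D^2),T]$. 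At $x_0$ the Levi-Civita part of $\sigma_1$ vanishes, and the $X$ part $2g(X,e_\mu)\xi_\mu$ is scalar, so this commutator is zero. This gives \eqref{lem3.5.1}.
\item Order $0$: we collect $[\sigma_0(\tilde D^2),T]$, the term $-\sqrt{-1}\partial_{\xi_j}\sigma_1\,\partial_{x_j}T$, and $-\partial_{x_j}^2T$. By \eqref{lem3.3.2}, at $x_0$ the only non-scalar parts of $\sigma_0(\tilde D^2)$ are $\sqrt{-1}\partial_{x_\mu}(X_\gamma)c(e_\mu)c(e_\gamma)$ and $-\partial_\mu\sigma_\mu$. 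The first gives the commutator term in \eqref{lem3.5.2}. The curvature term $\sum_\mu\partial_\mu\sigma_\mu$ is a Ricci contraction and is essentially scalar, so its commutator with a Clifford vector should cancel. I expect this step needs a short check; if a remainder survives, it is a Ricci-type term that the authors' formula omits. The term $-\sqrt{-1}\partial_{\xi_j}\sigma_1\,\partial_{x_j}T$ gives $-2\sqrt{-1}g(e_j,X)\partial_{x_j}T$, and the last term gives $-\partial_{x_j}^2T$.
\end{itemize}

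Finally, for the leading symbol of $\nabla^2(T)=[\tilde D^2,\nabla(T)]$, set $Q=\nabla(T)$, with $\sigma_1(Q)=-2\sqrt{-1}\xi_j\partial_jT$ and with $\sigma_0(Q)$ as above. The order-$2$ symbol of $[\tilde D^2,Q]$ has three sources:
\begin{itemize}
\item $-\sqrt{-1}\,\partial_{\xi_l}|\xi|^2\,\partial_{x_l}\sigma_1(Q)=-2\sqrt{-1}\xi_l\partial_{x_l}\bigl(-2\sqrt{-1}\xi_j\partial_jT\bigr)=-4\xi_j\xi_l\partial_j\partial_lT$;
\item the commutator $[\sigma_1(\tilde D^2),\sigma_1(Q)]$, which vanishes at $x_0$;
\item the reverse-order derivative terms $\partial_\xi\sigma(Q)\,\partial_x\sigma(\tilde D^2)$.
\end{itemize}
The main obstacle is the curvature term. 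Here we cannot evaluate $\sigma_1(Q)$ only at $x_0$: we must keep its first-order Taylor expansion, including the $O(x)$ part of $[\sigma_1(\tilde D^2),T]$ coming from $-2\sigma^\mu\xi_\mu$ with $\sigma_\mu=O(x)$. Differentiating once in $x$ and substituting $\partial_j\sigma_l(x_0)=\frac14\sum R_{jlst}c(e_s)c(e_t)$ up to normalization produces the two terms $\frac12R_{jlst}\bigl(c(e_s)c(e_t)c(da^1)-c(da^1)c(e_s)c(e_t)\bigr)\xi_j\xi_l$. I would carry out this bookkeeping carefully, fixing the sign and normalization of $\partial\sigma$ against Lemma \ref{lemma2}. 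This yields \eqref{lem3.5.3}.
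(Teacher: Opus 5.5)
Your proposal is correct and takes essentially the paper's route. Like the paper, you apply the composition formula to $\tilde D^2\circ c(da^1)-c(da^1)\circ\tilde D^2$ in normal coordinates, using $\Gamma^\mu(x_0)=\sigma^\mu(x_0)=0$ and $\partial_{x_k}\sigma^\mu(x_0)=\frac18\sum_{s,t}R_{k\mu ts}c(e_s)c(e_t)$, and for $\nabla^2$ you differentiate the full $x$-dependent $\sigma_1(\mathcal P_1)$ of \eqref{2.29}, including its $\sigma^\mu$ terms, against $-\sqrt{-1}\,\partial_{\xi_j}|\xi|^2$.

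The step you hedge on is immediate, and it is not a Ricci contraction: $\sum_\mu\partial_{x_\mu}\sigma_\mu(x_0)=\frac18\sum_{\mu,s,t}R_{\mu\mu ts}c(e_s)c(e_t)=0$ by antisymmetry of $R$ in its first index pair. For the same reason the curvature terms in \eqref{lem3.5.3} vanish identically, since $\sum_{j,l}R_{jlst}\xi_j\xi_l=0$, so your $\frac14$ versus $\frac18$ normalization slip is harmless.
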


\begin{proof}
	
We start with the observation that
 \begin{align} 
 	\nabla\left(\left[\tilde{D}, a^{1}\right]\right)=\left[\tilde{D}^{2}, c(da^1)\right]= \tilde{D}^{2} \circ c(da^1)- c(da^1) \circ \tilde{D}^{2}.
 \end{align}

Next, the principal symbol of a product of pseudo-differential operators in terms of the principal symbols of the factors, namely:
 \begin{align} 
\sigma\bigg( \tilde{D}^{2} \circ c(da^1)\bigg)=&\sum_{|\beta|=0}^\infty\frac{(-\sqrt{-1})^{|\beta|}}{\beta!}\partial^\beta_\xi(\sigma(\tilde{D}^{2}))\partial^\beta_x[\sigma(c(da^1))\label{le5.1}\\
=&\sigma_2(\tilde{D}^{2})c(da^1)+\sigma_1(\tilde{D}^{2})c(da^1)+(-\sqrt{-1})\sum_{j=1}^{n}\partial_{\xi_j}(\sigma_2(\tilde{D}^{2}))\partial_{x_j}(c(da^1))\nonumber\\
&+\sigma_0(\tilde{D}^{2})c(da^1)+(-\sqrt{-1})\sum_{j=1}^{n}\partial_{\xi_j}(\sigma_1(\tilde{D}^{2}))\partial_{x_j}(c(da^1))\nonumber\\
&-\sum_{j,l=1}^{n}\partial_{\xi_j}\partial_{\xi_l}(\sigma_2(\tilde{D}^{2}))\partial_{x_j}\partial_{x_l}(c(da^1)),\nonumber\\
\sigma\bigg( c(da^1)  \circ \tilde{D}^{2} \bigg)=&\sum_{|\beta|=0}^\infty\frac{(-\sqrt{-1})^{|\beta|}}{\beta!}\partial^\beta_\xi[\sigma(c(da^1))]\partial^\beta_x[\sigma(\tilde{D}^{2})]\label{le5.2}\\
	=&c(da^1)\sigma_2(\tilde{D}^{2})+c(da^1)\sigma_1(\tilde{D}^{2})+c(da^1)\sigma_0(\tilde{D}^{2}),\nonumber
\end{align}
According to \eqref{le5.1} and \eqref{le5.2}, we get
 \begin{align} 
	\sigma\bigg( \left[\tilde{D}^{2}, c(da^1)\right]\bigg)=&\sigma\bigg( \tilde{D}^{2} \circ c(da^1)\bigg)-\sigma\bigg( c(da^1) \circ \tilde{D}^{2}\bigg)\nonumber\\
	=&\sigma_1(\tilde{D}^{2})c(da^1)-c(da^1)\sigma_1(\tilde{D}^{2})+(-\sqrt{-1})\sum_{j=1}^{n}\partial_{\xi_j}(\sigma_2(\tilde{D}^{2}))\partial_{x_j}(c(da^1))\nonumber\\
	&+\sigma_0(\tilde{D}^{2})c(da^1)-c(da^1)\sigma_0(\tilde{D}^{2})+(-\sqrt{-1})\sum_{j=1}^{n}\partial_{\xi_j}(\sigma_1(\tilde{D}^{2}))\partial_{x_j}(c(da^1))\nonumber\\
	&-\sum_{j,l=1}^{n}\partial_{\xi_j}\partial_{\xi_l}(\sigma_2(\tilde{D}^{2}))\partial_{x_j}\partial_{x_l}(c(da^1)),
\end{align}

where
  \begin{align} 
	\sigma_{1}\bigg( \left[\tilde{D}^{2}, c(da^1)\right]\bigg)=&\sigma_1(\tilde{D}^{2})c(da^1)-c(da^1)\sigma_1(\tilde{D}^{2})+(-\sqrt{-1})\sum_{j=1}^{n}\partial_{\xi_j}(\sigma_2(\tilde{D}^{2}))\partial_{x_j}(c(da^1))\label{2.29}\\
	=&-2\sqrt{-1}\sigma^\mu c(da^1)\xi_\mu +2\sqrt{-1}c(da^1)\sigma^\mu \xi_\mu-2\sqrt{-1}\sum_{j,\gamma=1}^{n}\partial_{x_j}[e_\gamma(a^1)]c(e_\gamma)\xi_j, \nonumber\\
	\sigma_{0} \bigg( \left[\tilde{D}^{2}, c(da^1)\right]\bigg)=&\sigma_0(\tilde{D}^{2})c(da^1)-c(da^1)\sigma_0(\tilde{D}^{2})+(-\sqrt{-1})\sum_{j=1}^{n}\partial_{\xi_j}(\sigma_1(\tilde{D}^{2}))\partial_{x_j}(c(da^1))\label{2.30}\\
	&-\sum_{j,l=1}^{n}\partial_{\xi_j}\partial_{\xi_l}(\sigma_2(\tilde{D}^{2}))\partial_{x_j}\partial_{x_l}(c(da^1))\nonumber\\
	=&-\sum_{\mu,\nu=1}^{n}g^{\mu\nu}(\partial^{x}_\mu\sigma_\nu+\sigma_\mu\sigma_\nu-\Gamma^\alpha_{\mu\nu}\sigma_\alpha+\sqrt{-1}c(X)c(e_\nu) \sigma_\nu+\sqrt{-1} c(e_\nu)\sigma_\nu c(X))c(da^1)\nonumber\\
	&+\sqrt{-1}\sum_{\mu,\gamma=1}^{n}\partial^{x}_\mu(X_\gamma)c(e_\mu)c(e_\gamma)c(da^1)-\sqrt{-1}\sum_{\mu,\gamma=1}^{n}\partial^{x}_\mu(X_\gamma)c(da^1)c(e_\mu)c(e_\gamma) \nonumber\\
	&+\sum_{\mu,\nu=1}^{n}g^{\mu\nu}c(da^1)(\partial^{x}_\mu\sigma_\nu+\sigma_\mu\sigma_\nu-\Gamma^\alpha_{\mu\nu}\sigma_\alpha+\sqrt{-1}c(X)c(e_\nu) \sigma_\nu+\sqrt{-1} c(e_\nu)\sigma_\nu c(X))\nonumber\\
	&+\sum_{j,\gamma=1}^{n}(\Gamma^j-2\sigma^j)\partial_{x_j}[e_\gamma(a^1)]c(e_\gamma)-2\sqrt{-1}\sum_{j,\gamma=1}^{n}g(X,e_{j})(x)\partial_{x_j}[e_\gamma(a^1)]c(e_\gamma)\nonumber\\
    &-\sum_{j,\gamma=1}^{n}\partial_{x_j}^{2}[e_\gamma(a^1)]c(e_\gamma).\nonumber
\end{align}

Using the facts:
\begin{align}
	&\Gamma^\mu(x_0)=0,\label{2.31}\\
	&\sigma^\mu(x_0)=0,\label{2.32}\\
	&\partial_{x_k}(\Gamma^\mu)(x_0)=g^{\alpha\beta}\partial_{x_k}(\Gamma_{\alpha\beta}^\mu)(x_0)=\frac{2}{3}R_{k\alpha\mu\alpha}(x_0),\label{2.33}\\
	&\partial_{x_k}(\sigma^\mu)(x_0)=\frac{1}{4}\sum_{st=1}^n\partial_{x_k}(<\nabla^L_{\partial_\mu }e_s,e_t>)(x_0)c(e_s)c(e_t)=\frac{1}{8}\sum_{st=1}^nR_{k\mu t s}(x_0)c(e_s)c(e_t),\label{2.34}
\end{align}
we get \eqref{lem3.5.1} and \eqref{lem3.5.2} in lemma \ref{lemma3.5}.

Based on $\nabla(T)=\left[D^2, T\right]$, the following equation is obtained:
 \begin{align} 
 	\nabla^2\left(\left[\tilde{D}, a^{1}\right]\right)=\left[\tilde{D}^{2}, [\tilde{D}^{2}, c(da^1)]\right],
 \end{align}
let $\mathcal{P}_{1}:=\nabla\left(\left[\tilde{D}, a^{1}\right]\right)$, then
  \begin{align} 
 	\nabla^2\left(\left[\tilde{D}, a^{1}\right]\right)= \tilde{D}^{2} \circ \mathcal{P}_{1}- \mathcal{P}_{1} \circ \tilde{D}^{2}.
 \end{align}
 By calculating the principal symbol of a product of pseudo-differential operators, we obtain
   \begin{align} 
 	\sigma_{2}\bigg(\tilde{D}^{2} \circ \mathcal{P}_{1}\bigg)=&\sigma_2(\tilde{D}^{2})\sigma_0(\mathcal{P}_{1})+\sigma_1(\tilde{D}^{2})\sigma_1(\mathcal{P}_{1})+(-\sqrt{-1})\sum_{j=1}^{n}\partial_{\xi_j}(\sigma_2(\tilde{D}^{2}))\partial_{x_j}(\sigma_1(\mathcal{P}_{1})),\\ 
 	\sigma_{2}\bigg(\mathcal{P}_{1} \circ \tilde{D}^{2}\bigg)=&\sigma_1(\mathcal{P}_{1})\sigma_1(\tilde{D}^{2})+\sigma_0(\mathcal{P}_{1})\sigma_2(\tilde{D}^{2})+(-\sqrt{-1})\sum_{j=1}^{n}\partial_{\xi_j}(\sigma_1(\mathcal{P}_{1})\partial_{x_j}(\sigma_2(\tilde{D}^{2})).
 \end{align}
 
 According to \eqref{2.29} and \eqref{2.30}, we obtain
 \begin{align} 
 	\sigma_{2} \left(	\nabla^2\left(\left[\tilde{D}, a^{1}\right]\right)\right)=&\sigma_{2}\bigg(\tilde{D}^{2} \circ \mathcal{P}_{1}\bigg)-\sigma_{2}\bigg(\mathcal{P}_{1} \circ \tilde{D}^{2}\bigg)\nonumber\\
 	=&\sigma_1(\tilde{D}^{2})\sigma_1(\mathcal{P}_{1})-\sigma_1(\mathcal{P}_{1})\sigma_1(\tilde{D}^{2})+(-\sqrt{-1})\sum_{j=1}^{n}\partial_{\xi_j}(\sigma_2(\tilde{D}^{2}))\partial_{x_j}(\sigma_1(\mathcal{P}_{1})).
 \end{align}
 Using \eqref{2.31} to \eqref{2.34}, obtain
  \begin{align} 
 \bigg(\sigma_1(\tilde{D}^{2})\sigma_1(\mathcal{P}_{1})-\sigma_1(\mathcal{P}_{1})\sigma_1(\tilde{D}^{2})\bigg)(x_0)=0,
 \end{align}
and 
   \begin{align} 
 	&(-\sqrt{-1})\sum_{j=1}^{n}\partial_{\xi_j}(\sigma_2(\tilde{D}^{2}))\partial_{x_j}(\sigma_1(\mathcal{P}_{1}))(x_0)\nonumber\\
 	=&-\frac{1}{2}R_{j \mu t s}c(e_s)c(e_t)c(da^1)\xi_j\xi_\mu  +\frac{1}{2}R_{j \mu t s}c(da^1)c(e_s)c(e_t)\xi_j\xi_\mu \nonumber\\
 	&-4\sum_{j,l,\gamma=1}^{n}\partial_{x_j}\partial_{x_l}[e_\gamma(a^1)]c(e_\gamma)\xi_j\xi_l,
 \end{align}
 we get \eqref{lem3.5.3} in lemma \ref{lemma3.5}.
\end{proof}

To prove Theorem \ref{thm1}, based on Definition \ref{CSdefn}, we first compute $\phi_{1}(A)$, for $\tilde{D}=D+\sqrt{-1}c(X)$ , $A=a^{0}d a^{1} $, and $a^{0}, a^{1} \in M_{N}\big(C^{\infty}(M)\big),$ where

\begin{align}
	\phi_{1}\left(a^{0}, a^{1}\right)=&\int\hspace{-1.05em}- a^{0}\left[\tilde{D}, a^{1}\right]|\tilde{D}|^{-1}  -\frac{1}{4} \int\hspace{-1.05em}- a^{0} \nabla\left(\left[\tilde{D}, a^{1}\right]\right)|\tilde{D}|^{-3}  +\frac{1}{8} \int\hspace{-1.05em}- a^{0} \nabla^{2}\left(\left[\tilde{D}, a^{1}\right]\right)|\tilde{D}|^{-5}.
\end{align}

$\mathbf{Part~~i)}$
Let $n=3$, by \cite{FGV2}, we need to compute  $\int_{S^*M}{\rm tr}\bigg[\sigma_{-3}\bigg(a^{0}\left[\tilde{D}, a^{1}\right]|\tilde{D}|^{-1}\bigg)\bigg](x,\xi).$  Based on the algorithm yielding the principal
symbol of a product of pseudo-differential operators in terms of the principal symbols of the factors, we have
\begin{align}
	\sigma_{-3}\bigg(a^{0}\left[\tilde{D}, a^{1}\right]|\tilde{D}|^{-1}\bigg)(x_0)
	=&a^{0}c(da^1)\sigma_{-3}(|\tilde{D}|^{-1})(x_0)\nonumber\\
	=&-\frac{1}{8}a^{0}|\xi|^{-3} R c(da^1)+\frac{1}{4}\sum_{a,b=1}^{n} a^{0}|\xi|^{-5} \operatorname{Ric}_{a b} c(da^1) \xi_{a} \xi_{b}\nonumber\\
	&-\frac{\sqrt{-1}}{2}a^{0}|\xi|^{-3}\sum_{a,l=1}^{n}\partial_{x_a}(X_l)c(da^1) c(e_a) c(e_l)+\frac{1}{2}a^{0}|\xi|^{-3}c(da^1)c^{2}(X) \nonumber\\
	&-\frac{3}{2}\sqrt{-1}a^{0}|\xi|^{-5} \sum_{a,b=1}^{n}\partial_{x_b}(X_a)c(da^1)\xi_{a} \xi_{b}\nonumber\\
	&+\frac{3}{2}a^{0}|\xi|^{-5}\sum_{a,b=1}^{n}g(X,e_a)g(X,e_b)c(da^1)\xi_{a} \xi_{b}.
\end{align}

 Below, we compute each term of  $\int_{|\xi|=1} \operatorname{tr} \bigg[	\sigma_{-3}\bigg(a^{0}\left[\tilde{D}, a^{1}\right]|\tilde{D}|^{-1}\bigg)\bigg](x, \xi) \sigma(\xi)$ in turn.  
 When $n=3$, $\rm{tr}[id]=2 $ and $\rm{tr}\big( c(e_1) c(e_2) c(e_3) \big)=-2$. Then, based on the relation of the Clifford action $\operatorname{tr}\big(c(\mathcal{X})c(\mathcal{Y})\big)=-g(\mathcal{X},\mathcal{Y}) \rm{tr}[id]$, we get the following equations.

 \begin{align}
 &{\rm tr}\bigg(-\frac{1}{8}a^{0}|\xi|^{-3} R c(da^1)\bigg)\bigg|_{|\xi|=1}=\;0,\\
 &{\rm tr}\bigg(\frac{1}{4}\sum_{a,b=1}^{n}a^{0}|\xi|^{-5} \operatorname{Ric}_{a b} c(da^1) \xi_{a} \xi_{b}\bigg)\bigg|_{|\xi|=1}=\;0,\\
 &{\rm tr}\bigg(\frac{1}{2}a^{0}|\xi|^{-3}c^{2}(X)c(da^1)\bigg)\bigg|_{|\xi|=1}=\;0,\\
 &{\rm tr}\bigg(-\frac{3}{2}\sqrt{-1}a^{0}|\xi|^{-5} \sum_{a,b=1}^{n}\partial_{x_b}(X_a)c(da^1)\xi_{a} \xi_{b}\bigg)\bigg|_{|\xi|=1}=\;0,\\
 &{\rm tr}\bigg(-\frac{3}{2}a^{0}|\xi|^{-5}g(X,e_a)g(X,e_b)c(da^1)\xi_{a} \xi_{b}\bigg)\bigg|_{|\xi|=1}=\;0,\\
	&{\rm tr}\bigg(-\frac{\sqrt{-1}}{2}a^{0}|\xi|^{-3}\sum_{a,l=1}^{3}\partial_{x_a}(X_l)c(da^1) c(e_a) c(e_l)\bigg)\bigg|_{|\xi|=1}\\
	&=\sqrt{-1}a^{0}\sum_{a,l=1}^{3}\partial_{x_a}(X_l)\left\langle e_{1}^{*} \wedge e_{2}^{*}\wedge e_{3}^{*} , da^1 \wedge e_{a}^{*}\wedge e_{l}^{*} \right\rangle\nonumber\\
	&=\sqrt{-1}a^{0}\left\langle e_{1}^{*} \wedge e_{2}^{*}\wedge e_{3}^{*} , da^1 \wedge \nabla X^{*} \right\rangle\nonumber
\end{align}
Since $\int_{|\xi|=1} \xi_{a} \xi_{b}\sigma(\xi)=\frac{1}{n}\delta_{a}^{b}{\rm Vol}(S^{n-1})$, we obtain
 \begin{align}
&\int_{|\xi|=1}{\rm tr}\bigg(\sigma_{-3}\big(a^{0}\left[\tilde{D}, a^{1}\right]|\tilde{D}|^{-1}\big)(x_0)\bigg)\sigma(\xi)\nonumber\\
&=\sqrt{-1}a^{0}\left\langle e_{1}^{*} \wedge e_{2}^{*}\wedge e_{3}^{*} , da^1 \wedge \nabla X^{*} \right\rangle {\rm Vol}(S^{2})
\end{align}

$\mathbf{Part~~ii)}$

Let $n=3$, by \cite{FGV2}, we need to compute    $\int_{S^*M}{\rm tr}\bigg[\sigma_{-3}\bigg(a^{0} \nabla\left(\left[\tilde{D}, a^{1}\right]\right)|\tilde{D}|^{-3}\bigg)\bigg](x,\xi)$,   $\mathcal{P}_{1}:=\nabla\left(\left[\tilde{D}, a^{1}\right]\right)$Based on the algorithm yielding the principal
symbol of a product of pseudo-differential operators in terms of the principal symbols of the factors, we have

\begin{align}
\sigma_{-3}\bigg(a^{0} \mathcal{P}_{1}|\tilde{D}|^{-3}\bigg)&=a^{0}\left\{\sum_{|\alpha|=0}^\infty\frac{(-i)^{|\alpha|}}{\alpha!}\partial^\alpha_\xi[\sigma(\mathcal{P}_{1})]\partial^\alpha_x[\sigma(|\tilde{D}|^{-3})]\right\}_{-3}\nonumber\\
&=a^{0}\sigma_0(\mathcal{P}_{1})\sigma_{-3}(|\tilde{D}|^{-3})+a^{0}\sigma_1(\mathcal{P}_{1})\sigma_{-4}(|\tilde{D}|^{-3})+(-i)a^{0}\sum_{j=1}^n\partial_{\xi_j}[\sigma_1(\mathcal{P}_{1})]\partial_{x_j}[\sigma_{-3}(|\tilde{D}|^{-3})].
\end{align}

$\mathbf{(ii-1)}$ For $a^{0}\sigma_0(\mathcal{P}_{1})\sigma_{-3}(|\tilde{D}|^{-3})$:\\

According to \eqref{lem3.4.2} in lemma \ref{lemma3.4} and \eqref{lem3.5.2} in lemma \ref{lemma3.5}, we get

\begin{align}
	&\sigma_0(\mathcal{P}_{1})\sigma_{-3}(|\tilde{D}|^{-3})(x_0)\nonumber\\
	&=\sqrt{-1}|\xi|^{-3
	}\sum_{j,\gamma=1}^{n}\partial_{x_j}(X_\gamma)[c(e_j)c(e_\gamma)c(da^1)-c(da^1)c(e_j)c(e_\gamma)]\nonumber\\
	&-2\sqrt{-1}|\xi|^{-3}\sum_{j,\gamma=1}^{n}g(e_j,X)\partial_{x_j}[e_\gamma(a^1)]c(e_\gamma)-|\xi|^{-3}\sum_{j,\gamma=1}^{n}\partial_{x_j}^{2}[e_\gamma(a^1)]c(e_\gamma),
\end{align}

based on the relation of the Clifford action, we can obtain the equality
\begin{align}
	&{\rm tr}\bigg(\sqrt{-1}|\xi|^{-3
	}\sum_{j,\gamma=1}^{n}\partial_{x_j}(X_\gamma)[c(e_j)c(e_\gamma)c(da^1)-c(da^1)c(e_j)c(e_\gamma)]\bigg)\bigg|_{|\xi|=1}=\;0,\\
	&{\rm tr}\bigg(-2\sqrt{-1}|\xi|^{-3
	}\sum_{j,\gamma=1}^{n}g(e_j,X)\partial_{x_j}[e_\gamma(a^1)]c(e_\gamma)\bigg)\bigg|_{|\xi|=1}=\;0,\\
	&{\rm tr}\bigg(-|\xi|^{-3}\sum_{j,\gamma=1}^{n}\partial_{x_j}^{2}[e_\gamma(a^1)]c(e_\gamma)\bigg)\bigg|_{|\xi|=1}=\;0.
\end{align}
Then
 \begin{align}
	\int_{|\xi|=1}{\rm tr}\bigg(a^{0}\sigma_0(\mathcal{P}_{1})\sigma_{-3}(|\tilde{D}|^{-3})(x_0)\bigg)\sigma(\xi)=\;0.
\end{align}

$\mathbf{(ii-2)}$ For $a^{0}\sigma_1(\mathcal{P}_{1})\sigma_{-4}(|\tilde{D}|^{-3})$:\\

According to \eqref{lem3.4.3} in lemma \ref{lemma3.4} and \eqref{lem3.5.1} in lemma \ref{lemma3.5}, we get
\begin{align}
&\sigma_1(\mathcal{P}_{1})\sigma_{-4}(|\tilde{D}|^{-3})(x_0)\nonumber\\
&=-3\sqrt{-1}|\xi|^{-5}\sum_{a,j,\gamma=1}^{n}\partial_{x_j}[e_\gamma(a^1)]\{c(e_a),c(X)\}c(e_\gamma) \xi_a\xi_j,
\end{align}
based on the relation of the Clifford action, we get
\begin{align}
	{\rm tr}\bigg(-3\sqrt{-1}|\xi|^{-5}\sum_{a,j,\gamma=1}^{n}\partial_{x_j}[e_\gamma(a^1)]\{c(e_a),c(X)\}c(e_\gamma) \xi_a\xi_j\bigg)\bigg|_{|\xi|=1}=\;0,
\end{align}
then
 \begin{align}
	\int_{|\xi|=1}{\rm tr}\bigg(a^{0}\sigma_1(\mathcal{P}_{1})\sigma_{-4}(|\tilde{D}|^{-3})(x_0)\bigg)\sigma(\xi)=\;0.
\end{align}

$\mathbf{(ii-3)}$ For $(-i)a^{0}\sum_{j=1}^n\partial_{\xi_j}[\sigma_1(\mathcal{P}_{1})]\partial_{x_j}[\sigma_{-3}(|\tilde{D}|^{-3})]$:\\

According to \eqref{lem3.4.2} in lemma \ref{lemma3.4}, we get
\begin{align}
\partial_{x_j}[\sigma_{-3}(|\tilde{D}|^{-3})]=-\frac{1}{2} |\xi|^{-5}\sum_{a,b,c,d=1}^{n} \left(R_{a c b j} x^{c}+ R_{a j b d} x^{d}\right) \xi_{a} \xi_{b},
\end{align}
then
\begin{align}
	\partial_{x_j}[\sigma_{-3}(|\tilde{D}|^{-3})](x_0)=\;0,
\end{align}
we obtain
 \begin{align}
	\int_{|\xi|=1}{\rm tr}\bigg((-i)a^{0}\sum_{j=1}^n\partial_{\xi_j}[\sigma_1(\mathcal{P}_{1})]\partial_{x_j}[\sigma_{-3}(|\tilde{D}|^{-3})](x_0)\bigg)\sigma(\xi)=\;0.
\end{align}

Summing from {\bf (ii-1)} to {\bf (ii-3)} in turn, we get
 \begin{align}
	\int_{|\xi|=1}{\rm tr}\bigg[\sigma_{-3}\bigg(a^{0} \nabla\left(\left[\tilde{D}, a^{1}\right]\right)|\tilde{D}|^{-3}\bigg)(x_0)\bigg]\sigma(\xi)=\;0.
\end{align}

$\mathbf{Part~~iii)}$
Let $n=3$, by \cite{FGV2}, we need to compute    $\int_{S^*M}{\rm tr}\bigg[\sigma_{-3}\bigg(a^{0} \nabla^{2}\left(\left[\tilde{D}, a^{1}\right]\right)|\tilde{D}|^{-5}\bigg)\bigg](x,\xi)$,   $\mathcal{P}_{2}:=\nabla^{2}\left(\left[\tilde{D}, a^{1}\right]\right)$Based on the algorithm yielding the principal
symbol of a product of pseudo-differential operators in terms of the principal symbols of the factors, we have
 
\begin{align}
	\sigma_{-3}\bigg(a^{0}\mathcal{P}_{2}|\tilde{D}|^{-5}\bigg)=a^{0}\sigma_{2}(\mathcal{P}_{2})\sigma_{-5}(|\tilde{D}|^{-5})
\end{align}

According to \eqref{lem3.4.4} in lemma \ref{lemma3.4}, we get
\begin{align}
	\sigma_{-5}(|\tilde{D}|^{-5})(x_0)=|\xi|^{-5}.
\end{align}

\begin{align}
	\mathcal{P}_{2}:=&\nabla^{2}\left(\left[\tilde{D}, a^{1}\right]\right)\nonumber\\
	=&\nabla\left(\left[\tilde{D}^2,c(da^1)\right]\right)\nonumber\\
	=&\left[\tilde{D}^2,[\tilde{D}^2,c(da^1)]\right]\nonumber\\
	:=&\left[\tilde{D}^2,\mathcal{P}_{1}\right],
\end{align}
hence
\begin{align}
	\sigma_2(\mathcal{P}_{2})=&\sigma_{2}\bigg(\left[\tilde{D}^2,\mathcal{P}_{1}\right]\bigg)\nonumber\\
	=&\sigma_2\bigg(\tilde{D}^2 \circ \mathcal{P}_{1} \bigg) - \sigma_2\bigg( \mathcal{P}_{1} \circ \tilde{D}^2 \bigg)\nonumber\\
	=&\sigma_2(\tilde{D}^2 )\sigma_0( \mathcal{P}_{1} ) + \sigma_1(\tilde{D}^2 )\sigma_1( \mathcal{P}_{1} ) +(-\sqrt{-1})\sum_{j=1}^{n}\partial_{\xi_j}\big(\sigma_2(\tilde{D}^2 )\big)\partial_{x_j}\big(\sigma_1(\mathcal{P}_{1} )\big)\nonumber\\
	&- \sigma_1( \mathcal{P}_{1} )\sigma_1(\tilde{D}^2 )-\sigma_0( \mathcal{P}_{1} )\sigma_2(\tilde{D}^2 ) - (-\sqrt{-1})\sum_{j=1}^{n}\partial_{\xi_j}\big(\sigma_1(\mathcal{P}_{1})\big)\partial_{x_j}\big( \sigma_2(\tilde{D}^2 )\big)\nonumber\\
	=& \sigma_1(\tilde{D}^2 )\sigma_1( \mathcal{P}_{1} )- \sigma_1( \mathcal{P}_{1} )\sigma_1(\tilde{D}^2 ) +(-\sqrt{-1})\sum_{j=1}^{n}\partial_{\xi_j}\big(\sigma_2(\tilde{D}^2 )\big)\partial_{x_j}\big(\sigma_1(\mathcal{P}_{1} )\big).
\end{align}

$\mathbf{(iii-1)}$ For $\sigma_1(\tilde{D}^2 )\sigma_1( \mathcal{P}_{1} )- \sigma_1( \mathcal{P}_{1} )\sigma_1(\tilde{D}^2) $:\\

According to \eqref{lem3.3.1} in lemma \ref{lemma3.5} and \eqref{lem3.5.1} in lemma \ref{lemma3.5}, we get
\begin{align}
 \bigg(\sigma_1(\tilde{D}^2 )\sigma_1( \mathcal{P}_{1} )- \sigma_1( \mathcal{P}_{1} )\sigma_1(\tilde{D}^2 )\bigg)(x_0) =0
\end{align}

$\mathbf{(iii-2)}$ For $(-\sqrt{-1})\sum_{j=1}^{n}\partial_{\xi_j}\big(\sigma_2(\tilde{D}^2 )\big)\partial_{x_j}\big(\sigma_1(\mathcal{P}_{1} )\big)$:\\

According to \eqref{lem3.5.3} in lemma \ref{lemma3.5}
 \begin{align} 
	\sigma_{2} \left(	\nabla^2\left(\left[\tilde{D}, a^{1}\right]\right)\right)(x_0)=
	&\;\frac{1}{2}\sum_{i,j,s,t,l=1}^{n}R_{jlst}(x_0)c(e_s)c(e_t)c(da^1)\xi_j\xi_l\nonumber\\
	&-\frac{1}{2}\sum_{i,j,s,t,l=1}^{n}R_{jlst}(x_0)c(da^1)c(e_s)c(e_t)\xi_j\xi_l\nonumber\\
	&-4\sum_{j,l,\gamma=1}^{n}\partial_{x_j}\partial_{x_l}[e_\gamma(a^1)](x_0)c(e_\gamma)\xi_j\xi_l,
\end{align}
then
\begin{align}
\sigma_{-3}\bigg(a^{0} \nabla^{2}\left(\left[\tilde{D}, a^{1}\right]\right)|\tilde{D}|^{-5}\bigg)(x_0)=
&\;\frac{1}{2}|\xi|^{-5}\sum_{i,j,s,t,l=1}^{n}R_{jlst}(x_0)c(e_s)c(e_t)c(da^1)\xi_j\xi_l\nonumber\\
&-\frac{1}{2}|\xi|^{-5}\sum_{i,j,s,t,l=1}^{n}R_{jlst}(x_0)c(da^1)c(e_s)c(e_t)\xi_j\xi_l\nonumber\\
&-4|\xi|^{-5}\sum_{j,l,\gamma=1}^{n}\partial_{x_j}\partial_{x_l}[e_\gamma(a^1)](x_0)c(e_\gamma)\xi_j\xi_l.
\end{align}
Based on the relation of the Clifford action, we get
\begin{align}
	{\rm tr}\bigg(\frac{1}{2}|\xi|^{-5}\sum_{i,j,s,t,l=1}^{n}R_{jlst}(x_0)\big(c(e_s)c(e_t)c(da^1)-c(da^1)c(e_s)c(e_t)\big)\xi_j\xi_l\bigg)\bigg|_{|\xi|=1}=\;0,
\end{align}
\begin{align}
	{\rm tr}\bigg(-4|\xi|^{-5}\sum_{j,l,\gamma=1}^{n}\partial_{x_j}\partial_{x_l}[e_\gamma(a^1)](x_0)c(e_\gamma)\xi_j\xi_l\bigg)\bigg|_{|\xi|=1}=\;0,
\end{align}

we obtain
\begin{align}
	\int_{|\xi|=1}{\rm tr}\bigg[(-\sqrt{-1})a^{0}|\xi|^{5}\sum_{j=1}^{n}\partial_{\xi_j}\big(\sigma_2(\tilde{D}^2 )\big)\partial_{x_j}\big(\sigma_1(\mathcal{P}_{1} )\big)(x_0)\bigg]\sigma(\xi)=\;0.
\end{align}
Summing from {\bf (iii-1)} to {\bf (iii-2)} in turn, we get
\begin{align}
	\int_{|\xi|=1}{\rm tr}\bigg[\sigma_{-3}\bigg(a^{0} \nabla^{2}\left(\left[\tilde{D}, a^{1}\right]\right)|\tilde{D}|^{-5}\bigg)(x_0)\bigg]\sigma(\xi)=\;0.
\end{align}

Then, summing from {\bf part i)} to {\bf part iii)} in turn, we get

\begin{align}
	{\rm{Trace}} \otimes \phi_{1}(a^{0}, a^{1})=4\sqrt{-1}\pi \int_{M} {\rm{tr}}\bigg( a^{0}\left\langle e_{1}^{*} \wedge e_{2}^{*}\wedge e_{3}^{*} , da^1 \wedge \nabla X^{*} \right\rangle\bigg) d{\rm Vol}_M.\nonumber
\end{align}
Based on Definition \ref{CSdefn}, we get
\begin{align}
	{\rm{Trace}} \otimes \phi_{3}\left(a^{0}, a^{1}, a^{2}, a^{3}\right)=&\frac{1}{12} \int\hspace{-1.05em}- a^{0}\left[\tilde{D}, a^{1}\right]\left[\tilde{D}, a^{2}\right]\left[\tilde{D}, a^{3}\right]|\tilde{D}|^{-3} \nonumber\\
	=&\frac{V_3}{12}\int_{M} {\rm{tr}} \bigg( a^{0}da^1 \wedge da^2 \wedge da^3 \bigg),
\end{align}	
where $a^{j}(0 \leqslant j \leqslant 3) \in M_{N}\big(C^{\infty}(M)\big)$ and $V_3 = (2 \pi^2)^{-1}$,
then
\begin{align}
	6\pi k 	{\rm{Trace}} \otimes \phi_{3}\left(AdA+ \frac{2}{3} A \wedge A \wedge A \right)=\frac{k}{4\pi} c_{0} \int_{M} {\rm{tr}} \bigg(A  \wedge d A+\frac{2}{3} A \wedge A  \wedge A \bigg).
\end{align}	

By the above computations and the definition \ref{CSdefn}, we can get Theorem \ref{thm1}.

\section{The Dirac operator with torsion }

Let $M$ be a smooth compact oriented Riemannian $n$-dimensional manifolds without boundary and $N$ be a vector bundle on $M$.
We say that $P$ is a differential operator of Laplace type, if it has locally the form
\begin{equation}\label{p}
	P=-(g^{ij}\partial_i\partial_j+A^i\partial_i+B),
\end{equation}
where $\partial_{i}$  is a natural local frame on $TM,$ $(g^{ij})_{1\leq i,j\leq n}$ is the inverse matrix associated to the metric
matrix  $(g_{ij})_{1\leq i,j\leq n}$ on $M,$ $A^{i}$ and $B$ are smooth sections of $\textrm{End}(N)$ on $M$ (endomorphism).
If $P$ satisfies the form \eqref{p}, then there is a unique
connection $\nabla$ on $N$ and a unique endomorphism $E$ such that
\begin{equation}
	P=-[g^{ij}(\nabla_{\partial_{i}}\nabla_{\partial_{j}}- \nabla_{\nabla^{L}_{\partial_{i}}\partial_{j}})+E],\nonumber
\end{equation}
where $\nabla^{L}$ is the Levi-Civita connection on $M$. Moreover
(with local frames of $T^{*}M$ and $N$), $\nabla_{\partial_{i}}=\partial_{i}+\omega_{i} $
and $E$ are related to $g^{ij}$, $A^{i}$ and $B$ through
\begin{eqnarray}
	&&\omega_{i}=\frac{1}{2}g_{ij}\big(A^{i}+g^{kl}\Gamma_{ kl}^{j} id\big),\nonumber\\
	&&E=B-g^{ij}\big(\partial_{i}(\omega_{j})+\omega_{i}\omega_{j}-\omega_{k}\Gamma_{ ij}^{k} \big),\nonumber
\end{eqnarray}
where $\Gamma_{ kl}^{j}$ is the  Christoffel coefficient of $\nabla^{L}$.

For smooth vector fields $X, Y, Z$ on $M$, let $\nabla^{T}$ denote the metric connection
\begin{equation}
	\langle \nabla_{X}^{T} Y, Z \rangle=\langle \nabla_{X}^{L} Y, Z \rangle + T(X, Y, Z),\nonumber
\end{equation}
where $T$ is a there form.

Let $M$ be an $n=2m$ dimensional ($n\geq 3$) spin  manifold, we can lift $\nabla^T$ to $\nabla^{S(TM),T}$ on $S(TM)$.
The Dirac operator with torsion $D_T$ is defined as:
\begin{align}
	D_T=&\sum_{j=1}^{n}c(e_{j})\nabla_{e_{j}}^{S(TM),T}\nonumber\\
	=&\sum_{j=1}^{n}c(e_{j})\bigg(e_{j}+\frac{1}{4}\sum_{l, t=1}^{n}\langle \nabla_{e_{j}}^{T}e_{l}, e_{t}\rangle c(e_{l})c(e_{t})\bigg)\nonumber\\
	=&\;D+\frac{1}{4}\sum_{j,l,t=1}^{n}T(e_j, e_l, e_t)c(e_{j})c(e_{l})c(e_{t})\nonumber\\
	=&\;D+\frac{3}{2}\sum_{1\leqslant j< l< t\leqslant n}T(e_j, e_l, e_t)c(e_{j})c(e_{l})c(e_{t}),\nonumber
\end{align}
where $D$ is the Dirac operator induced by the Levi-Civita connection, and $c(e_{j})$ be the Clifford action which satisfies the relation
\begin{align}
	&c(e_{i})c(e_{j})+c(e_{j})c(e_{i})=-2g^{M}(e_{i}, e_{j})=-2\delta_i^j.\nonumber
\end{align}

Let $D_T=D+fc(e_1)c(e_2)c(e_3)$, similarly to lemma \ref{lemma3.3}, we have respective symbols of $D_T^{2}$:
\begin{lem}\label{lemma4.1}
	\begin{align}
		\sigma_1(D_T^2)=&\sqrt{-1} \sum_{\mu=1}^{n} (\Gamma^\mu-2\sigma^\mu)\xi_\mu+\sqrt{-1}fc(e_1)c(e_2)c(e_3)c(\xi)+\sqrt{-1}fc(\xi)c(e_1)c(e_2)c(e_3),\nonumber\\
		\sigma_0(D_T^2)=&- \sum_{\mu,\nu=1}^{n} g^{\mu\nu}(\partial_\mu\sigma_\nu+\sigma_\mu\sigma_\nu-\Gamma^\alpha_{\mu\nu}\sigma_\alpha+fc(e_1)c(e_2)c(e_3)c(e_\nu) \sigma_\nu+ \sum_{\nu=1}^{n} fc(e_\nu)\sigma_\nu c(e_1)c(e_2)c(e_3))\nonumber\\
		&+\sum_{\mu=1}^{n} \frac{\partial f}{\partial_{x_\mu}}c(e_\mu)c(e_1)c(e_2)c(e_3) +f^2+\frac{1}{4}s.
	\end{align}
\end{lem}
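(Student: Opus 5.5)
The plan mirrors the proof of Lemma \ref{lemma3.3}. Write $\Theta:=c(e_1)c(e_2)c(e_3)$, so that $D_T=D+f\Theta$. Expanding the square gives
\begin{align}
D_T^2=D^2+D\circ(f\Theta)+(f\Theta)\circ D+f^2\Theta^2 .\nonumber
\end{align}
I take $\sigma_2,\sigma_1,\sigma_0$ of $D^2$ from Lemma \ref{lemma1}, and the symbols of $D$ as $\sigma_1(D)=\sqrt{-1}c(\xi)$ and $\sigma_0(D)=\sum_\nu c(e_\nu)\sigma_\nu$ (with $\sigma_\nu$ the spin connection coefficients). For each cross term I then apply the composition formula
\begin{align}
\sigma(P_1P_2)=\sum_{\alpha}\frac{(-\sqrt{-1})^{|\alpha|}}{\alpha!}\partial_\xi^\alpha\sigma(P_1)\,\partial_x^\alpha\sigma(P_2).\nonumber
\end{align}

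For $D\circ(f\Theta)$ the second factor is a zeroth-order endomorphism with no $\xi$-dependence. Its degree-one part is therefore $\sqrt{-1}c(\xi)f\Theta$. Its degree-zero part has two pieces. The first is $\sigma_0(D)f\Theta=\sum_\nu c(e_\nu)\sigma_\nu f\Theta$. The second is the derivative term
\begin{align}
(-\sqrt{-1})\sum_\mu \partial_{\xi_\mu}\big(\sqrt{-1}c(\xi)\big)\partial_{x_\mu}(f\Theta)=\sum_\mu c(e_\mu)\partial_{x_\mu}(f\Theta).\nonumber
\end{align}
The product $(f\Theta)\circ D$ has no derivative term, since $f\Theta$ does not depend on $\xi$. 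It contributes $\sqrt{-1}f\Theta c(\xi)$ in degree one and $f\Theta\sum_\nu c(e_\nu)\sigma_\nu$ in degree zero. Adding these to $\sigma_1(D^2)$ gives the stated $\sigma_1(D_T^2)$ directly.

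For $\sigma_0$, two reductions remain. First, I evaluate the square of $\Theta$ with the Clifford relations: $\Theta^2=c(e_1)c(e_2)c(e_3)c(e_1)c(e_2)c(e_3)=+1$. This produces the term $+f^2$, unlike the $-c^2(X)$ in Lemma \ref{lemma3.3}. Second, in $\partial_{x_\mu}(f\Theta)$ I keep only $\partial_{x_\mu}f\,\Theta$, which gives $\sum_\mu \partial_{x_\mu}f\, c(e_\mu)\Theta$. This step is exact because the orthonormal frame $e_i$ is used, so $\Theta$ has constant matrix entries. The connection contributions are then grouped in the same way as in Lemma \ref{lemma3.3}, following the paper's convention of writing them inside the $g^{\mu\nu}$-sum.

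The only delicate point is bookkeeping. I must keep track of the ordering of the noncommuting Clifford factors, that is, $\Theta c(\xi)$ versus $c(\xi)\Theta$ and $\Theta c(e_\nu)\sigma_\nu$ versus $c(e_\nu)\sigma_\nu\Theta$. I must also fix the sign convention used in the grouping, and check that $\Theta^2=+1$. I will verify this value using the fact that $\Theta$ is central in the Clifford algebra for $n=3$. No analytic difficulty arises.
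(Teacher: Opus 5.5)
Your route is the paper's route. The paper proves Lemma~\ref{lemma4.1} only by saying it is ``similar to Lemma~\ref{lemma3.3}'', that is, expand $D_T^2=D^2+D\circ(f\Theta)+(f\Theta)\circ D+f^2\Theta^2$ and apply the composition formula. Your computations of $\sigma_1(D_T^2)$, of the term $\sum_\mu \partial_{x_\mu}f\,c(e_\mu)\Theta$, and of $+f^2$ via $\Theta^2=+1$ are correct.

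The step that does not go through is the last one, where you say the connection terms are ``grouped in the same way as in Lemma~\ref{lemma3.3}, following the paper's convention''. Since $\sigma_0(D)=+\sum_\nu c(e_\nu)\sigma_\nu$, your own computation gives $+\sum_\nu f\big(\Theta c(e_\nu)\sigma_\nu+c(e_\nu)\sigma_\nu\Theta\big)$. The displayed formula instead puts these terms inside $-\sum_{\mu,\nu}g^{\mu\nu}(\cdots)$. That means the opposite sign, a superfluous sum over $\mu$, and a doubled sum over $\nu$ in the second term. No convention reconciles the two, and the same slip already occurs in Lemma~\ref{lemma3.3}. The displayed identity holds only at the centre $x_0$ of normal coordinates, where $\sigma_\nu(x_0)=0$, $g^{\mu\nu}=\delta_{\mu\nu}$ and $c(dx^\mu)=c(e_\mu)$; that is the only place it is used later. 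So either prove the corrected formula with $+$ signs, or say explicitly that you verify the lemma at $x_0$.

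The ordering bookkeeping you call delicate is in fact vacuous. $\Theta$ is central and $\Theta^2=1$, so on the two-dimensional spinor module $\Theta=\pm\mathrm{id}$; with the paper's normalisation $\mathrm{tr}\,\Theta=-2$, it is $\Theta=-\mathrm{id}$. Hence $D_T=D\mp f$, and the cross terms collapse to $2\sqrt{-1}f\Theta c(\xi)$ and $2f\Theta\sigma_0(D)$. Centrality alone does not give $\Theta^2=1$, though; that needs the direct Clifford computation you already wrote.
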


\begin{lem}\label{lemma4.2}
	In normal coordinates around a fixed point of the manifold $M$, the symbols representation of the higher inverse of the Laplace operator with torsion read

	\begin{align} 
		\sigma_{-3}(|D_T|^{-1}) & =-\frac{1}{8}|\xi|^{-3} R +\frac{1}{4}|\xi|^{-5}\sum_{a,b=1}^{n}  \operatorname{Ric}_{a b}  \xi_{a} \xi_{b}\nonumber\\
		&-\frac{1}{2}|\xi|^{-3}\sum_{a=1}^{n}\left(\frac{\partial f}{\partial_{x_a}} c(e_a) c(e_1)c(e_2)c(e_3)+f^{2}\right) \nonumber\\
		& +\frac{3}{4}|\xi|^{-5} \sum_{a,b=1}^{n}\frac{\partial f}{\partial_{x_b}}\bigl\{c(e_a), c(e_1)c(e_2)c(e_3)\bigr\}\xi_{a} \xi_{b}\nonumber\\
		& -\frac{3}{8}|\xi|^{-5} f^2\sum_{a,b=1}^{n}\bigl\{c(e_a), c(e_1)c(e_2)c(e_3)\bigr\}\bigl\{c(e_b), c(e_1)c(e_2)c(e_3)\bigr\}\xi_{a} \xi_{b}+o(\mathbf{1})\label{4.2.1}\\
		\sigma_{-3}(|D_T|^{-3})=&|\xi|^{-5}\sum_{a,b,c,d=1}^{n} \left(\delta_{a b}-\frac{1}{2} R_{a c b d} x^{c} x^{d}\right) \xi_{a} \xi_{b}+O\left(\mathbf{x}^{2}\right),\label{lem4.2.2} \\
		\sigma_{-4}(|D_T|^{-3})=&-\sqrt{-1}|\xi|^{-5}\sum_{a,b=1}^{n}  \operatorname{Ric}_{a b} x^{b} \xi_{a}\label{lem4.2.3}\nonumber\\
		&+\frac{ 3 \sqrt{-1}}{8}|\xi|^{-5}\sum_{a,b,j,k=1}^{n} \operatorname{R}_{b a j k}(x_0)c(e_j)c(e_k) x^{b} \xi_{a}\nonumber\\
		&-\frac{3\sqrt{-1}}{2}|\xi|^{-5}\sum_{a=1}^{n}f\bigl\{c(e_a),c(e_1)c(e_2)c(e_3)\bigr\} \xi_a +O\left(\mathbf{x}\right),\\
		\sigma_{-5}(|D_T|^{-5})=&|\xi|^{-7}\sum_{a,b,c,d=1}^{n} \left(\delta_{a b}-\frac{5}{6} R_{a c b d} x^{c} x^{d}\right) \xi_{a} \xi_{b}+O\left(\mathbf{x}^{2}\right),\label{lem4.2.4} 
	\end{align}
	where ${\rm R}_{a c b d}$ and $\operatorname{Ric}_{a b}$ are the components of the Riemann and Ricci tensor, s is the scalar curvature.
\end{lem}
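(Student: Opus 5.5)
The plan is to obtain Lemma \ref{lemma4.2} as a direct specialization of Lemma \ref{lemma2} (from \cite{DL3}) to the perturbation $\mathcal{B}=fc(e_1)c(e_2)c(e_3)$, taking $k=-1,-3,-5$. The proof mirrors that of Lemma \ref{lemma3.4} for $\mathcal{B}=\sqrt{-1}c(X)$. Write $\Theta:=c(e_1)c(e_2)c(e_3)$. We work in normal coordinates centred at $x_0$, with $\{e_a\}$ the orthonormal frame obtained by parallel transport along geodesics, so that $\Theta$ is constant to first order at $x_0$. Then the Taylor expansion is $\mathcal{B}=\mathcal{B}_0+\mathcal{B}_a x^a+o(\mathbf{x})$ with $\mathcal{B}_0=f(x_0)\Theta$ and $\mathcal{B}_a=\frac{\partial f}{\partial x_a}(x_0)\Theta$.

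First, for the leading symbols \eqref{lem4.2.2} and \eqref{lem4.2.4}, we set $k=-3$ and $k=-5$ in the formula for $\sigma_k(|\mathbb{D}|^k)$. This gives $|\xi|^{k-2}(\delta_{ab}+\frac{k}{6}R_{acbd}x^cx^d)\xi_a\xi_b$, with $\frac{k}{6}=-\frac12$ and $-\frac56$ respectively. These terms do not involve $\mathcal{B}$, so they coincide with \eqref{lem3.4.2} and \eqref{lem3.4.4}.

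Second, for \eqref{lem4.2.3}, we set $k=-3$ in $\sigma_{k-1}$. The curvature part gives $-\sqrt{-1}|\xi|^{-5}\mathrm{Ric}_{ab}x^b\xi_a+\frac{3\sqrt{-1}}{8}|\xi|^{-5}R_{abjk}x^b\xi_a c(e_j)c(e_k)$, up to the index relabelling used in \eqref{lem3.4.3}. The perturbation part is $\frac{\sqrt{-1}}{2}(-3)|\xi|^{-5}\xi_a\{c(e_a),f\Theta\}$, which yields the stated $-\frac{3\sqrt{-1}}{2}|\xi|^{-5}f\{c(e_a),\Theta\}\xi_a$. The terms involving $\mathcal{B}_a x^a$ are $O(\mathbf{x})$ and are absorbed into the remainder.

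Third, for \eqref{4.2.1}, we set $k=-1$ in $\sigma_{k-2}$. The coefficients become $\frac{k}{8}=-\frac18$, $\frac{k(k-2)}{12}=\frac14$, $\frac{k}{2}=-\frac12$ and $\frac{k(k-2)}{4}=\frac34$. Substituting $\mathcal{B}_a$ and $\mathcal{B}_0$, the term $c(e_a)\mathcal{B}_a+\mathcal{B}_0^2$ becomes $\frac{\partial f}{\partial x_a}c(e_a)\Theta+f^2\Theta^2$. The $\xi_a\xi_b$ bracket becomes $\frac{\partial f}{\partial x_b}\{c(e_a),\Theta\}-\frac12 f^2\{c(e_a),\Theta\}\{c(e_b),\Theta\}$, which matches the two $|\xi|^{-5}$ terms, since $\frac34\cdot\frac12=\frac38$.

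To compare with the stated $f^2$ term we need $\Theta^2$. In dimension $3$ the Clifford relations give $\Theta^2=c(e_1)c(e_2)c(e_3)c(e_1)c(e_2)c(e_3)=+1$. I expect the main obstacle to be this sign and normalization bookkeeping. We must check that $\Theta^2=1$ reproduces the sign written in front of $f^2$, and that the conventions of \cite{DL3} (self-adjointness of $\mathcal{B}$, and the factors of $\sqrt{-1}$ in $\sigma_{k-1}$ versus Lemma \ref{lemma4.1}) are consistent with $D_T=D+f\Theta$. If the sign of $f^2$ disagrees, I would recompute $\sigma_0(D_T^2)$ from Lemma \ref{lemma4.1}, using $(f\Theta)^2=f^2$, and rederive the $\mathcal{B}_0^2$ contribution directly from the parametrix recursion for $|D_T|^{-1}=(D_T^2)^{-1/2}$. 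All remaining curvature-only terms are copied verbatim from the $k=-1$ case already recorded in Lemma \ref{lemma3.4}.
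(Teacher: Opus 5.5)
Your proposal is correct and follows the paper's intended route. The paper states Lemma \ref{lemma4.2} without a written proof; like Lemma \ref{lemma3.4}, it is the specialization of Lemma \ref{lemma2} to $\mathcal{B}=fc(e_1)c(e_2)c(e_3)$ with $k=-1,-3,-5$, and your observation $\Theta^2=1$ confirms the stated $-\frac{1}{2}|\xi|^{-3}f^2$ term, so no recomputation is needed. One small correction: Lemma \ref{lemma2} gives $R_{abjk}$, and going to the stated $R_{bajk}$ is a sign flip, not an index relabelling, though since that term is $O(\mathbf{x})$ it does not affect the statement as written.
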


\begin{lem}\label{lemma4.3}

\begin{align} 
	\sigma_{1} \left[D_T^{2}, c(da^1)\right](x_0)=&\;-2\sqrt{-1}\sum_{j,\gamma=1}^{n}\partial_{x_j}[e_\gamma(a^1)]c(e_\gamma)\xi_j\nonumber\\
	&+\sqrt{-1}f\bigg(c(e_1)c(e_2)c(e_3)c(\xi)c(da^1)+c(\xi)c(e_1)c(e_2)c(e_3)c(da^1)\bigg)\nonumber\\
	&-\sqrt{-1}f\bigg(c(da^1)c(e_1)c(e_2)c(e_3)c(\xi)+c(da^1)c(\xi)c(e_1)c(e_2)c(e_3)\bigg),\label{lem4.3.1}\\
	\sigma_{0} \left[D_T^{2}, c(da^1)\right](x_0)	=&\sum_{\mu=1}^{n}\frac{\partial f}{\partial_{x_\mu}}c(e_\mu)c(e_1)c(e_2)c(e_3)c(da^1)-\sum_{\mu=1}^{n}\frac{\partial f}{\partial_{x_\mu}}c(da^1)c(e_\mu)c(e_1)c(e_2)c(e_3) \nonumber\\
	&+\sum_{j,\gamma=1}^{n}(fc(e_1)c(e_2)c(e_3)c(e_j)+fc(e_j)c(e_1)c(e_2)c(e_3))\partial_{x_j}[e_\gamma(a^1)]c(e_\gamma)\nonumber\\
	&-\sum_{j,\gamma=1}^{n}\partial_{x_j}^{2}[e_\gamma(a^1)]c(e_\gamma),\label{lem4.3.2}\\
	\sigma_{2} \left( \nabla^2\left(\left[D_T, a^{1}\right]\right)\right)(x_0)
	=&-f(x_0)\bigg(c(I)c(\xi)c(I)c(\xi)c(da^1)+c(\xi)c(I)c(\xi)c(I)c(da^1)\bigg)\nonumber\\
	&+f(x_0)\bigg(c(da^1)c(I)c(\xi)c(I)c(\xi)+c(da^1)c(\xi)c(I)c(\xi)c(I)\bigg)\nonumber\\
	&-\frac{1}{2}\sum_{j,\mu, t, s=1}^{n} R_{j \mu t s}(x_0)c(e_s)c(e_t)c(da^1)\xi_j \xi_\mu \nonumber\\ &+\frac{1}{2} \sum_{j,\mu, t, s=1}^{n}R_{j \mu t s}(x_0)c(da^1)c(e_s)c(e_t)\xi_j \xi_\mu \nonumber\\
	&+2\sum_{j,l=1}^{n}\frac{\partial f}{\partial_{x_j}}(x_0)\bigg(c(I)c(e_l)c(da^1)+c(e_l)c(I)c(da^1)\bigg)\xi_j \xi_l\nonumber\\
	&+4f\sum_{j,l,\gamma=1}^{n}\partial_{x_j}[e_\gamma(a^1)](x_0)\bigg(c(I)c(e_l)c(r_\gamma)+c(e_l)c(I)c(e_\gamma)\bigg)\xi_j \xi_l\nonumber\\
	&-2\sum_{j,l=1}^{n}\frac{\partial f}{\partial_{x_j}}(x_0)\bigg(c(da^1)c(I)c(e_l)+c(da^1)c(e_l)c(I)\bigg)\xi_j \xi_l\nonumber\\
	&-4f\sum_{j,l,\gamma=1}^{n}\partial_{x_j}[e_\gamma(a^1)](x_0)\bigg(c(r_\gamma)c(I)c(e_l)+c(e_\gamma)c(e_l)c(I)\bigg)\xi_j \xi_l\nonumber\\	
	&+8\sqrt{-1}\sum_{j,l,\gamma=1}^{n}\partial_{x_j}\partial_{x_l}[e_\gamma(a^1)](x_0)c(e_\gamma)\xi_j \xi_l\label{lem4.3.3},
\end{align}
where $c(I):=c(e_1)c(e_2)c(e_3)$.
\end{lem}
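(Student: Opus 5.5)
We follow the proof of Lemma \ref{lemma3.5}, with $\sqrt{-1}c(X)$ replaced by $f\,c(I)$, $c(I)=c(e_1)c(e_2)c(e_3)$. Write
\begin{align}
\nabla\left(\left[D_T,a^1\right]\right)=\left[D_T^2,c(da^1)\right],\qquad \nabla^2\left(\left[D_T,a^1\right]\right)=\left[D_T^2,\mathcal{P}_1\right],\nonumber
\end{align}
where $\mathcal{P}_1:=[D_T^2,c(da^1)]$. The composition formula $\sigma(P_1P_2)=\sum_\beta\frac{(-\sqrt{-1})^{|\beta|}}{\beta!}\partial_\xi^\beta\sigma(P_1)\,\partial_x^\beta\sigma(P_2)$ is applied with the symbols of $D_T^2$ from Lemma \ref{lemma4.1}. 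Since $c(da^1)$ has symbol independent of $\xi$, only the terms with $\xi$-derivatives falling on $\sigma(D_T^2)$ survive in $D_T^2\circ c(da^1)$. In $c(da^1)\circ D_T^2$ only the products $c(da^1)\sigma_k(D_T^2)$ appear. Everything is then evaluated at $x_0$ in normal coordinates using \eqref{2.31}--\eqref{2.34}, namely $\Gamma^\mu(x_0)=\sigma^\mu(x_0)=0$ together with the formulas for their first derivatives.

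For \eqref{lem4.3.1}, collect the degree-one part:
\begin{align}
\sigma_1(D_T^2)c(da^1)-c(da^1)\sigma_1(D_T^2)-\sqrt{-1}\sum_j\partial_{\xi_j}|\xi|^2\,\partial_{x_j}c(da^1).\nonumber
\end{align}
At $x_0$ the $\Gamma,\sigma$ terms vanish. What remains is the commutator of $c(da^1)$ with $\sqrt{-1}f\big(c(I)c(\xi)+c(\xi)c(I)\big)$, plus $-2\sqrt{-1}\sum\partial_{x_j}[e_\gamma(a^1)]c(e_\gamma)\xi_j$, which is the stated formula.

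For \eqref{lem4.3.2}, collect the degree-zero part:
\begin{align}
[\sigma_0(D_T^2),c(da^1)]-\sqrt{-1}\sum_j\partial_{\xi_j}\sigma_1(D_T^2)\,\partial_{x_j}c(da^1)-\sum_{j,l}\partial_{\xi_j}\partial_{\xi_l}\tfrac12|\xi|^2\,\partial_{x_j}\partial_{x_l}c(da^1).\nonumber
\end{align}
The $\sigma_\mu$ terms and the $f\sigma_\nu$ cross terms vanish at $x_0$. The scalars $f^2+\frac14 s$ commute with $c(da^1)$. What is left is:
\begin{itemize}
\item the commutator of $\partial_\mu f\,c(e_\mu)c(I)$ with $c(da^1)$;
\item the term $\partial_{\xi_j}\big(f(c(I)c(\xi)+c(\xi)c(I))\big)=f\big(c(I)c(e_j)+c(e_j)c(I)\big)$ multiplied by $-\sqrt{-1}\cdot\sqrt{-1}\,\partial_{x_j}c(da^1)$;
\item the Laplacian term $-\sum_j\partial_{x_j}^2[e_\gamma(a^1)]c(e_\gamma)$.
\end{itemize}

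For \eqref{lem4.3.3}, note that $\sigma_2(D_T^2)=|\xi|^2$ is scalar, so
\begin{align}
\sigma_2([D_T^2,\mathcal{P}_1])=[\sigma_1(D_T^2),\sigma_1(\mathcal{P}_1)]-\sqrt{-1}\sum_j 2\xi_j\,\partial_{x_j}\sigma_1(\mathcal{P}_1)+\sqrt{-1}\sum_j\partial_{\xi_j}\sigma_1(\mathcal{P}_1)\,\partial_{x_j}|\xi|^2.\nonumber
\end{align}
The last term vanishes at $x_0$ because $\partial_{x_j}g^{ab}(x_0)=0$. Unlike in Lemma \ref{lemma3.5}, the commutator term does not vanish. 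At $x_0$, $\sigma_1(D_T^2)=\sqrt{-1}f(c(I)c(\xi)+c(\xi)c(I))$, and commuting it with $\sigma_1(\mathcal{P}_1)$ from \eqref{lem4.3.1} produces the quartic Clifford products $c(I)c(\xi)c(I)c(\xi)c(da^1)$ and their mirror images. It also produces cross terms between $f(c(I)c(\xi)+c(\xi)c(I))$ and $\partial_{x_j}[e_\gamma(a^1)]c(e_\gamma)\xi_j$, which give the $4f\,\partial_{x_j}[e_\gamma(a^1)]$ terms.

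The derivative term $-2\sqrt{-1}\xi_j\partial_{x_j}\sigma_1(\mathcal{P}_1)$ must be expanded before evaluating at $x_0$. It needs the full $\sigma_1(\mathcal{P}_1)$ near $x_0$, including the $-2\sqrt{-1}[\sigma^\mu,c(da^1)]\xi_\mu$ pieces, because $\partial_{x_j}\sigma^\mu(x_0)\neq0$. By \eqref{2.34} these give the curvature terms $\mp\frac12R_{j\mu ts}$. Differentiating $f$ gives the $2\partial_{x_j}f$ terms, and differentiating $\partial_{x_l}[e_\gamma(a^1)]$ gives the second-derivative term. One must also account for the $x$-dependence of $c(e_j)$ in the frame and of $c(\xi)=\sum\xi_ic(\partial^i)$; in normal coordinates their first derivatives vanish at $x_0$, so these contribute nothing.

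The main obstacle is this last step. It requires careful bookkeeping of signs and powers of $\sqrt{-1}$ in the $f$-dependent Clifford products, since $c(I)$ is central in dimension $3$ only up to the relations $c(I)c(e_a)=c(e_a)c(I)$. I would use that centrality to simplify and then cross-check the coefficients, in particular the factor $8\sqrt{-1}$ on the second-derivative term.
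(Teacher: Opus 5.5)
Your route is the paper's: the same $\mathcal P_1$, the same composition formula, and evaluation at $x_0$ via \eqref{2.31}--\eqref{2.34}. Your derivations of \eqref{lem4.3.1} and \eqref{lem4.3.2} work as you describe. One precision: the $g^{\mu\nu}\partial_\mu\sigma_\nu$ piece of $\sigma_0(D_T^2)$ vanishes by \eqref{2.34} and $R_{\mu\mu ts}=0$, not by $\sigma_\mu(x_0)=0$.

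\textbf{The gap is \eqref{lem4.3.3}.} Carried out correctly, your plan does not produce the printed formula, and your attribution of terms is off. Put $Q:=c(I)c(\xi)+c(\xi)c(I)$. At $x_0$ one has $\sigma_1(D_T^2)=\sqrt{-1}fQ$ and $\sigma_1(\mathcal P_1)=-2\sqrt{-1}\sum_l\xi_l\partial_{x_l}c(da^1)+\sqrt{-1}f[Q,c(da^1)]$. Hence
\[
\big[\sigma_1(D_T^2),\sigma_1(\mathcal P_1)\big](x_0)=2f\sum_{l}\xi_l\big[Q,\partial_{x_l}c(da^1)\big]-f^2\big(Q^2c(da^1)-2Qc(da^1)Q+c(da^1)Q^2\big).
\]
\begin{itemize}
\item The quartic Clifford terms carry $(\sqrt{-1})^2f^2=-f^2$, not $f$. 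They also include the middle terms $Qc(da^1)Q$, which are missing from the printed first two lines.
\item The commutator supplies only half of the $4f$ terms. The other half is $-2\sqrt{-1}\xi_j\cdot\sqrt{-1}f[Q,\partial_{x_j}c(da^1)]$, which comes from $\partial_{x_j}$ hitting $c(da^1)$ inside the $f$-part of $\sigma_1(\mathcal P_1)$. Your list of derivative contributions leaves this out; the paper does split these terms $2f+2f$.
\item The second-derivative term is $(-2\sqrt{-1})(-2\sqrt{-1})\xi_j\xi_l\partial_{x_j}\partial_{x_l}c(da^1)$, so its coefficient is $-4$, as in Lemma \ref{lemma3.5}, not $8\sqrt{-1}$. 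The paper reaches $8\sqrt{-1}$ only by writing $-4$ instead of $-2\sqrt{-1}$ in its expression for $\partial_{x_j}\sigma_1(\mathcal P_1)$. It likewise writes $f(x_0)$ where $f(x_0)^2$ belongs.
\item The curvature terms equal $-\frac12R_{j\mu ts}[c(e_s)c(e_t),c(da^1)]\xi_j\xi_\mu=0$ by antisymmetry in $j,\mu$.
\end{itemize}

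\textbf{A quick cross-check.} Your closing remark points at the right check, once it is stated correctly. $c(I)$ is genuinely central in $Cl_3$, with $c(I)^2=1$. So on the rank-two spinor bundle it is $\pm\mathrm{id}$, and the paper's $\mathrm{tr}\,c(I)=-2$ gives $-\mathrm{id}$. Hence $D_T=D-f$ and $Q=-2c(\xi)$.

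With this, the printed quartic lines sum to $0$. The true contribution is $16f^2\big(|\xi|^2c(da^1)-\langle\xi,da^1\rangle c(\xi)\big)$, which is nonzero in general. For example, take constant $f$ on a flat torus with $a^1$ locally linear at $x_0$: every other printed term vanishes there, so the printed formula gives $0$.

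So your argument proves a corrected \eqref{lem4.3.3}, not the statement as printed. Every correction is a single Clifford vector or a commutator, hence traceless, so Part III and Theorem \ref{thm2} are unaffected. You should still state and prove the corrected formula rather than assert the printed one.
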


\begin{proof}
	
We start with the observation that
\begin{align} 
	\nabla\left(\left[D_T, a^{1}\right]\right)&=\left[D_T^{2}, c(da^1)\right]\nonumber\\
	&= D_T^{2} \circ c(da^1)- c(da^1) \circ D_T^{2}.
\end{align}
Next, the principal symbol of a product of pseudo-differential operators in terms of the principal symbols of the factors, namely:
\begin{align} 
	\sigma\bigg(D_T^{2} \circ c(da^1)\bigg)=&\sum_{|\beta|=0}^\infty\frac{(-\sqrt{-1})^{|\beta|}}{\beta!}\partial^\beta_\xi(\sigma(D_T^{2}))\partial^\beta_x[\sigma(c(da^1))\nonumber\\
	=&\sigma_2(D_T^{2})c(da^1)+\sigma_1(D_T^{2})c(da^1)+(-\sqrt{-1})\sum_{j=1}^{n}\partial_{\xi_j}(\sigma_2(D_T^{2}))\partial{x_j}(c(da^1))\nonumber\\
	&+\sigma_0(D_T^{2})c(da^1)+(-\sqrt{-1})\sum_{j=1}^{n}\partial_{\xi_j}(\sigma_1(D_T^{2}))\partial{x_j}(c(da^1))\nonumber\\
	&-\sum_{j,l=1}^{n}\partial_{\xi_j}\partial_{\xi_l}(\sigma_2(D_T^{2}))\partial{x_j}\partial{x_l}(c(da^1)),\\ 
	\sigma\bigg( c(da^1)  \circ D_T^{2} \bigg)=&\sum_{|\beta|=0}^\infty\frac{(-\sqrt{-1})^{|\beta|}}{\beta!}\partial^\beta_\xi[\sigma(c(da^1))]\partial^\beta_x[\sigma(D_T^{2})]\nonumber\\
	=&c(da^1)\sigma_2(D_T^{2})+c(da^1)\sigma_1(D_T^{2})+c(da^1)\sigma_0(D_T^{2}),
\end{align}
where
\begin{align} 
	\sigma_{1} \left[D_T^{2}, c(da^1)\right]=&\sigma_1(D_T^{2})c(da^1)-c(da^1)\sigma_1(D_T^{2})+(-\sqrt{-1})\sum_{j=1}^{n}\partial_{\xi_j}(\sigma_2(D_T^{2}))\partial{x_j}(c(da^1))\nonumber\\
	=&-2\sqrt{-1}\sigma^\mu c(da^1)\xi_\mu +2\sqrt{-1}c(da^1)\sigma^\mu \xi_\mu-2\sqrt{-1}\sum_{j,\gamma=1}^{n}\partial_{x_j}[e_\gamma(a^1)]c(e_\gamma)\xi_j \nonumber\\
	&+\sqrt{-1}f\bigg(c(e_1)c(e_2)c(e_3)c(\xi)c(da^1)+c(\xi)c(e_1)c(e_2)c(e_3)c(da^1)\bigg)\nonumber\\
	&-\sqrt{-1}f\bigg(c(da^1)c(e_1)c(e_2)c(e_3)c(\xi)+c(da^1)c(\xi)c(e_1)c(e_2)c(e_3)\bigg)\label{4.13} \\
	\sigma_{0} \left[D_T^{2}, c(da^1)\right]=&\sigma_0(D_T^{2})c(da^1)-c(da^1)\sigma_0(D_T^{2})+(-\sqrt{-1})\sum_{j=1}^{n}\partial_{\xi_j}(\sigma_1(D_T^{2}))\partial{x_j}(c(da^1))\nonumber\\
	&-\frac{1}{2}\sum_{j,l=1}^{n}\partial_{\xi_j}\partial_{\xi_l}(\sigma_2(D_T^{2}))\partial{x_j}\partial{x_l}(c(da^1))\nonumber\\
	=&-\sum_{\mu,\nu=1}^{n}g^{\mu\nu}(\partial^{x}_\mu\sigma_\nu+\sigma_\mu\sigma_\nu-\Gamma^\alpha_{\mu\nu}\sigma_\alpha+fc(e_1)c(e_2)c(e_3)c(e_\nu) \sigma_\nu+ f c(e_\nu)\sigma_\nu c(e_1)c(e_2)c(e_3))c(da^1)\nonumber\\
	&+\sum_{\mu=1}^{n}\frac{\partial f}{\partial_{x_\mu}}c(e_\nu)c(e_1)c(e_2)c(e_3)c(da^1)-\sum_{\mu=1}^{n}\frac{\partial f}{\partial_{x_\mu}}c(da^1)c(e_\nu)c(e_1)c(e_2)c(e_3) \nonumber\\
	&+\sum_{\mu,\nu=1}^{n}g^{\mu\nu}c(da^1)(\partial^{x}_\mu\sigma_\nu+\sigma_\mu\sigma_\nu-\Gamma^\alpha_{\mu\nu}\sigma_\alpha+fc(e_1)c(e_2)c(e_3)c(e_\nu) \sigma_\nu+f c(e_\nu)\sigma_\nu c(e_1)c(e_2)c(e_3))\nonumber\\
	&+\sum_{j,\gamma=1}^{n}(\Gamma^j-2\sigma^j+fc(e_1)c(e_2)c(e_3)c(e_j)+fc(e_j)c(e_1)c(e_2)c(e_3))\partial_{x_j}[e_\gamma(a^1)]c(e_\gamma)\nonumber\\
	&-\sum_{j,\gamma=1}^{n}\partial_{x_j}^{2}[e_\gamma(a^1)]c(e_\gamma).\label{4.14}
\end{align}

Based on $\nabla(T)=\left[D^2, T\right]$, the following equation is obtained:
\begin{align} 
	\nabla^2\left(\left[D_T, a^{1}\right]\right)=\left[D_T^{2}, [D_T^{2}, c(da^1)]\right].
\end{align}
Let $\mathcal{P}_{1}:=\nabla\left(\left[D_T, a^{1}\right]\right)$, then
\begin{align} 
	\nabla^2\left(\left[D_T, a^{1}\right]\right)= D_T^{2} \circ \mathcal{P}_{1}- \mathcal{P}_{1} \circ D_T^{2}.
\end{align}
 By calculating the principal symbol of a product of pseudo-differential operators, we obtain
\begin{align} 
	\sigma_{2}\bigg(D_T^{2} \circ \mathcal{P}_{1}\bigg)=&\sigma_2(D_T^{2})\sigma_0(\mathcal{P}_{1})+\sigma_1(D_T^{2})\sigma_1(\mathcal{P}_{1})+(-\sqrt{-1})\sum_{j=1}^{n}\partial_{\xi_j}(\sigma_2(D_T^{2}))\partial{x_j}(\sigma_1(\mathcal{P}_{1})),\\ 
	\sigma_{2}\bigg(\mathcal{P}_{1} \circ D_T^{2}\bigg)=&\sigma_1(\mathcal{P}_{1})\sigma_1(D_T^{2})+\sigma_0(\mathcal{P}_{1})\sigma_2(D_T^{2})+(-\sqrt{-1})\sum_{j=1}^{n}\partial_{\xi_j}(\sigma_1(\mathcal{P}_{1})\partial{x_j}(\sigma_2(D_T^{2})).
\end{align}

According to \eqref{4.13} and \eqref{4.14}, we obtain
\begin{align} 
	\sigma_{2} \left(	\nabla^2\left(\left[D_T, a^{1}\right]\right)\right)=&\sigma_{2}\bigg(D_T^{2} \circ \mathcal{P}_{1}\bigg)-\sigma_{2}\bigg(\mathcal{P}_{1} \circ D_T^{2}\bigg)\nonumber\\
	=&\sigma_1(D_T^{2})\sigma_1(\mathcal{P}_{1})-\sigma_1(\mathcal{P}_{1})\sigma_1(D_T^{2})+(-\sqrt{-1})\sum_{j=1}^{n}\partial_{\xi_j}(\sigma_2(D_T^{2}))\partial{x_j}(\sigma_1(\mathcal{P}_{1})),
\end{align}
Using \eqref{2.31} to \eqref{2.34}, obtain
\begin{align} 
	&\bigg(\sigma_1(D_T^{2})\sigma_1(\mathcal{P}_{1})-\sigma_1(\mathcal{P}_{1})\sigma_1(D_T^{2})\bigg)(x_0)\\
	=&\;\;2f(x_0)\sum_{j,l,\gamma=1}^{n}\partial_{x_j}[e_\gamma(a^1)]\bigg(c(I)c(e_l)c(e_\gamma)+c(e_l)c(I)c(e_\gamma)\bigg)\xi_j\xi_l\nonumber\\
	&-2f(x_0)\sum_{j,l,\gamma=1}^{n}\partial_{x_j}[e_\gamma(a^1)]\bigg(c(e_\gamma)c(I)c(e_l)+c(e_\gamma)c(e_l)c(I)\bigg)\xi_j\xi_l\nonumber\\
	&-f(x_0)\bigg(c(I)c(\xi)c(I)c(\xi)c(da^1)+c(\xi)c(I)c(\xi)c(I)c(da^1)\bigg)\nonumber\\
	&+f(x_0)\bigg(c(da^1)c(I)c(\xi)c(I)c(\xi)+c(da^1)c(\xi)c(I)c(\xi)c(I)\bigg),
\end{align}
and
\begin{align} 
\sum_{j=1}^{n}\partial_{\xi_j}(\sigma_2(D_T^{2}))(x_0)=&2\sum_{j=1}^{n}\xi_j,\\
    \sum_{j=1}^{n}\partial{x_j}(\sigma_1(\mathcal{P}_{1}))(x_0)=&-\frac{\sqrt{-1}}{4}\sum_{j,\mu, t, s=1}^{n} R_{j \mu t s}(x_0)c(e_s)c(e_t)c(da^1)\xi_\mu \nonumber\\
	& +\frac{\sqrt{-1}}{4} \sum_{j,\mu, t, s=1}^{n}R_{j \mu t s}(x_0)c(da^1)c(e_s)c(e_t)\xi_\mu \nonumber\\
	&+\sqrt{-1}\sum_{j=1}^{n}\frac{\partial f}{\partial_{x_j}}(x_0)\bigg(c(I)c(\xi)c(da^1)+c(\xi)c(I)c(da^1)\bigg)\nonumber\\
	&+\sqrt{-1}f\sum_{j,\gamma=1}^{n}\partial_{x_j}[e_\gamma(a^1)](x_0)\bigg(c(I)c(\xi)c(r_\gamma)+c(\xi)c(I)c(e_\gamma)\bigg)\nonumber\\
	&-\sqrt{-1}\sum_{j=1}^{n}\frac{\partial f}{\partial_{x_j}}(x_0)\bigg(c(da^1)c(I)c(\xi)+c(da^1)c(\xi)c(I)\bigg)\nonumber\\
	&-\sqrt{-1}f\sum_{j,\gamma=1}^{n}\partial_{x_j}[e_\gamma(a^1)](x_0)\bigg(c(r_\gamma)c(I)c(\xi)+c(e_\gamma)c(\xi)c(I)\bigg)\nonumber\\	
	&-4\sum_{j,l,\gamma=1}^{n}\partial_{x_j}\partial_{x_l}[e_\gamma(a^1)](x_0)c(e_\gamma)\xi_l
\end{align}

Then, we obtain Lemma \ref{lemma4.3}.

\end{proof}

To prove Theorem \ref{thm2}, based on Definition \ref{CSdefn}, we first compute $\phi_{1}(A)$, for $D_T=D+fc(e_1)c(e_2)c(e_3)$, $A=a^{0}d a^{1} $, and $a^{0}, a^{1} \in M_{N}\big(C^{\infty}(M)\big),$ where
\begin{align}
	\varphi_{1}\left(a^{0}, a^{1}\right)=&\int\hspace{-1.05em}- a^{0}\left[D_T, a^{1}\right]|D_T|^{-1}  -\frac{1}{4} \int\hspace{-1.05em}- a^{0} \nabla\left(\left[D_T, a^{1}\right]\right)|D_T|^{-3}  +\frac{1}{8} \int\hspace{-1.05em}- a^{0} \nabla^{2}\left(\left[D_T, a^{1}\right]\right)|D_T|^{-5}
\end{align}

$\mathbf{Part~~I)}$
Let $n=3$, by \cite{FGV2}, we need to compute  $\int_{S^*M}{\rm tr}\bigg[\sigma_{-3}\bigg(a^{0}\left[D_T, a^{1}\right]|D_T|^{-1}\bigg)\bigg](x,\xi).$  Based on the algorithm yielding the principal
symbol of a product of pseudo-differential operators in terms of the principal symbols of the factors, we have
\begin{align}
	\sigma_{-3}\bigg(a^{0}\left[D_T, a^{1}\right]|D_T|^{-1}\bigg)(x_0)
	=&a^{0}c(da^1)\sigma_{-3}(|D_T|^{-1})(x_0)\nonumber\\
	=&-\frac{1}{8}a^{0}|\xi|^{-3} R c(da^1) +\frac{1}{4}a^{0}|\xi|^{-5}\sum_{a,b=1}^{n}  \operatorname{Ric}_{a b} c(da^1) \xi_{a} \xi_{b}\nonumber\\
	&-\frac{1}{2}a^{0}|\xi|^{-3}\sum_{a=1}^{n}f^{2}c(da^1)-\frac{1}{2}a^{0}|\xi|^{-3}\sum_{a=1}^{n}\frac{\partial f}{\partial_{x_a}}c(da^1) c(e_a) c(e_1)c(e_2)c(e_3) \nonumber\\
	& +\frac{3}{4}a^{0}|\xi|^{-5} \sum_{a,b=1}^{n}\frac{\partial f}{\partial_{x_b}}c(da^1)\bigl\{c(e_a), c(e_1)c(e_2)c(e_3)\bigr\}\xi_{a} \xi_{b}\nonumber\\
	& -\frac{3}{8}a^{0}|\xi|^{-5} f^2\sum_{a,b=1}^{n}c(da^1)\bigl\{c(e_a), c(e_1)c(e_2)c(e_3)\bigr\}\bigl\{c(e_b), c(e_1)c(e_2)c(e_3)\bigr\}\xi_{a} \xi_{b}.
\end{align}

 Below, we compute each term of  $\int_{|\xi|=1} \operatorname{tr} \bigg[	\sigma_{-3}\bigg(a^{0}\left[\tilde{D}, a^{1}\right]|\tilde{D}|^{-1}\bigg)\bigg](x, \xi) \sigma(\xi)$ in turn.  
When $n=3$, $\rm{tr}[id]=2 $ and $\rm{tr}\big( c(e_1) c(e_2) c(e_3)\big)=-2$. Then, based on the relation of the Clifford action $\operatorname{tr}\big(c(\mathcal{X})c(\mathcal{Y})\big)=-g(\mathcal{X},\mathcal{Y}) \rm{tr}[id]$, we get the following equations.

$\mathbf{(I-1)}$
\begin{align}
	&{\rm tr}\bigg(-\frac{1}{8}a^{0}|\xi|^{-3} R c(da^1)\bigg)\bigg|_{|\xi|=1}=\;0,\\
	&{\rm tr}\bigg(\frac{1}{4}a^{0}|\xi|^{-5} \sum_{a,b=1}^{n}\operatorname{Ric}_{a b} c(da^1) \xi_{a} \xi_{b}\bigg)\bigg|_{|\xi|=1}=\;0,\\
	&{\rm tr}\bigg(-\frac{1}{2}a^{0}|\xi|^{-3}f^{2}c(da^1)\bigg)\bigg|_{|\xi|=1}=\;0,
\end{align}

$\mathbf{(I-2)}$
Based on the relation of the Clifford action, we get
\begin{align}
	&{\rm tr}\bigg(-\frac{1}{2}a^{0}|\xi|^{-3}\sum_{a=1}^{n}\frac{\partial f}{\partial_{x_a}}c(da^1) c(e_a) c(e_1)c(e_2)c(e_3)\bigg)\bigg|_{|\xi|=1}\nonumber\\
	=&{\rm tr}\bigg(-\frac{1}{2}a^{0}\sum_{a,\gamma=1}^{n}\frac{\partial f}{\partial_{x_a}}e_{\gamma}(a^1)c(e_\gamma) c(e_a) c(e_1)c(e_2)c(e_3)\bigg)\bigg|_{|\xi|=1}\nonumber\\
	=&\left\{\begin{array}{ll}0, 
		& \text { if } a\neq\gamma; \\
		-a^{0}\sum_{a=1}^{n}\frac{\partial f}{\partial_{x_a}}e_{a}(a^1), 
		& \text { if } a= \gamma,
	\end{array}\right.
\end{align}
then
\begin{align}\label{4.30}
	&\int_{|\xi|=1}{\rm tr}\bigg(-\frac{1}{2}a^{0}|\xi|^{-3}\sum_{a=1}^{n}\frac{\partial f}{\partial_{x_a}}c(da^1) c(e_a) c(e_1)c(e_2)c(e_3)\bigg)\sigma(\xi)\nonumber\\
	&= -a^{0}\sum_{a=1}^{n}\frac{\partial f}{\partial_{x_a}}e_{a}(a^1) {\rm Vol}(S^{2}).
\end{align}

$\mathbf{(I-3)}$
Based on the relation of the Clifford action, we get
\begin{align}
	&{\rm tr}\bigg(\frac{3}{4}a^{0}|\xi|^{-5} \sum_{a,b=1}^{n}\frac{\partial f}{\partial_{x_b}}c(da^1)\bigl\{c(e_a), c(e_1)c(e_2)c(e_3)\bigr\}\xi_{a} \xi_{b}\bigg)\bigg|_{|\xi|=1}\nonumber\\
	=&{\rm tr}\bigg(\frac{3}{4}a^{0}\sum_{a,b,\gamma=1}^{n}\frac{\partial f}{\partial_{x_b}}e_{\gamma}(a^1)c(e_\gamma)\bigl\{c(e_a), c(e_1)c(e_2)c(e_3)\bigr\}\xi_{a} \xi_{b}\bigg)\nonumber\\
	=&\frac{3}{4}a^{0}\sum_{a,b,\gamma=1}^{n}\frac{\partial f}{\partial_{x_b}}e_{\gamma}(a^1)\xi_{a} \xi_{b}{\rm tr}\bigg(c(e_\gamma)c(e_a) c(e_1)c(e_2)c(e_3)+c(e_\gamma) c(e_1)c(e_2)c(e_3)c(e_a)\bigg)\nonumber\\
	=&\left\{\begin{array}{ll}0, 
		& \text { if } a\neq\gamma; \\
		3a^{0}\sum_{a=1}^{n}\frac{\partial f}{\partial_{x_b}}e_{a}(a^1)\xi_{a} \xi_{b}, 
		& \text { if } a= \gamma,
	\end{array}\right.
\end{align}
then
\begin{align}\label{4.32}
	&\int_{|\xi|=1}{\rm tr}\bigg(\frac{3}{4}a^{0}|\xi|^{-5} \sum_{a,b=1}^{n}\frac{\partial f}{\partial_{x_b}}c(da^1)\bigl\{c(e_a), c(e_1)c(e_2)c(e_3)\bigr\}\xi_{a} \xi_{b}\bigg)\sigma(\xi)\nonumber\\
	&=a^{0}\sum_{a=1}^{n}\frac{\partial f}{\partial_{x_a}}e_{a}(a^1) {\rm Vol}(S^{2})
\end{align}

$\mathbf{(I-4)}$
Based on the relation of the Clifford action, we get
\begin{align}
	&{\rm tr}\bigg(\sum_{a=1}^{n}c(da^1)\bigl\{c(e_a), c(e_1)c(e_2)c(e_3)\bigr\}\bigl\{c(e_a), c(e_1)c(e_2)c(e_3)\bigr\}\bigg)\nonumber\\
	=&\sum_{a,\gamma=1}^{n}e_\gamma(a^1){\rm tr}\bigg(c(e_\gamma)c(e_a) c(e_1)c(e_2)c(e_3)c(e_a) c(e_1)c(e_2)c(e_3)\nonumber\\
	&\;\;\;\;\;\;\;\;\;\;\;\;\;\;\;\;\;\;\;\;\;+c(e_\gamma)c(e_a) c(e_1)c(e_2)c(e_3) c(e_1)c(e_2)c(e_3)c(e_a)\nonumber\\
	&\;\;\;\;\;\;\;\;\;\;\;\;\;\;\;\;\;\;\;\;\;+c(e_\gamma) c(e_1)c(e_2)c(e_3)c(e_a)c(e_a) c(e_1)c(e_2)c(e_3)\nonumber\\
	&\;\;\;\;\;\;\;\;\;\;\;\;\;\;\;\;\;\;\;\;\;+c(e_\gamma) c(e_1)c(e_2)c(e_3)c(e_a) c(e_1)c(e_2)c(e_3)c(e_a)\bigg)\nonumber\\
	&=\;0,
\end{align}
then
\begin{align}\label{4.34}
	&\int_{|\xi|=1}{\rm tr}\bigg(-\frac{3}{8}a^{0}|\xi|^{-5} f^2\sum_{a,b=1}^{n}c(da^1)\bigl\{c(e_a), c(e_1)c(e_2)c(e_3)\bigr\}\bigl\{c(e_b), c(e_1)c(e_2)c(e_3)\bigr\}\xi_{a} \xi_{b}\bigg)\sigma(\xi)\nonumber\\
	&=\;0.
\end{align}
Summing from {\bf (I-1)} to {\bf (I-4)} in turn, we get
\begin{align}
	&\int_{|\xi|=1}{\rm tr}\bigg(	\sigma_{-3}\big(a^{0}\left[D_T, a^{1}\right]|D_T|^{-1}\big)(x_0)\bigg)\sigma(\xi)\nonumber\\
	&=\;0.
\end{align}

$\mathbf{Part~~II)}$

Let $n=3$, by \cite{FGV2}, we need to compute    $\int_{S^*M}{\rm tr}\bigg[\sigma_{-3}\bigg(a^{0} \nabla\left(\left[D_T, a^{1}\right]\right)|D_T|^{-3}\bigg)\bigg](x,\xi)$,   $\mathcal{P}_{1}:=\nabla\left(\left[D_T, a^{1}\right]\right)$Based on the algorithm yielding the principal
symbol of a product of pseudo-differential operators in terms of the principal symbols of the factors, we have

\begin{align}
	\sigma_{-3}\bigg(a^{0} \mathcal{P}_{1}|D_T|^{-3}\bigg)&=a^{0}\left\{\sum_{|\alpha|=0}^\infty\frac{(-i)^{|\alpha|}}{\alpha!}\partial^\alpha_\xi[\sigma(\mathcal{P}_{1})]\partial^\alpha_x[\sigma(|D_T|^{-3})]\right\}_{-3}\nonumber\\
	&=a^{0}\sigma_0(\mathcal{P}_{1})\sigma_{-3}(|D_T|^{-3})+a^{0}\sigma_1(\mathcal{P}_{1})\sigma_{-4}(|D_T|^{-3})+(-i)a^{0}\sum_{j=1}^n\partial_{\xi_j}[\sigma_1(\mathcal{P}_{1})]\partial_{x_j}[\sigma_{-3}(|D_T|^{-3})].
\end{align}

$\mathbf{(II-1)}$ For $a^{0}\sigma_0(\mathcal{P}_{1})\sigma_{-3}(|D_T|^{-3})$:\\
According to \eqref{lem4.2.2} in lemma \ref{lemma4.2} and \eqref{lem4.3.2} in lemma \ref{lemma4.3}, we get
\begin{align}
	&\sigma_0(\mathcal{P}_{1})\sigma_{-3}(|D_T|^{-3})(x_0)\nonumber\\
	&=|\xi|^{-3}\sum_{\mu=1}^{n}\frac{\partial f}{\partial_{x_\mu}}\bigg(c(e_\mu)c(e_1)c(e_2)c(e_3)c(da^1)-c(da^1)c(e_\mu)c(e_1)c(e_2)c(e_3)\bigg) \nonumber\\
	&+|\xi|^{-3}f\sum_{j,\gamma=1}^{n}\partial_{x_j}[e_\gamma(a^1)]\bigg(c(e_1)c(e_2)c(e_3)c(e_j)c(e_\gamma)+c(e_j)c(e_1)c(e_2)c(e_3)c(e_\gamma)\bigg)\nonumber\\
	&-|\xi|^{-3}\sum_{j,\gamma=1}^{n}\partial_{x_j}^{2}[e_\gamma(a^1)]c(e_\gamma),
\end{align}
based on the relation of the Clifford action, we can obtain the equality
\begin{align}
	{\rm tr}\bigg(|\xi|^{-3}\sum_{\mu=1}^{n}\frac{\partial f}{\partial_{x_\mu}}\big(c(e_\mu)c(e_1)c(e_2)c(e_3)c(da^1)-c(da^1)c(e_\mu)c(e_1)c(e_2)c(e_3)\big)\bigg)\bigg|_{|\xi|=1}=\;0,
\end{align}

\begin{align}
	&{\rm tr}\bigg(|\xi|^{-3}f\sum_{j,\gamma=1}^{n}\partial_{x_j}[e_\gamma(a^1)]\big(c(e_1)c(e_2)c(e_3)c(e_j)c(e_\gamma)+c(e_j)c(e_1)c(e_2)c(e_3)c(e_\gamma)\big)\bigg)\bigg|_{|\xi|=1}\\
	=&f\sum_{j,\gamma=1}^{n}\partial_{x_j}[e_\gamma(a^1)]{\rm tr}\bigg(c(e_1)c(e_2)c(e_3)c(e_j)c(e_\gamma)+c(e_j)c(e_1)c(e_2)c(e_3)c(e_\gamma)\bigg)\nonumber\\
	=&\left\{\begin{array}{ll}0, 
		& \text { if } j\neq\gamma; \\
		4f\sum_{j=1}^{n}\partial_{x_j}[e_j(a^1)], 
		& \text { if } j= \gamma,
	\end{array}\right.
\end{align}
and
\begin{align}
{\rm tr}\bigg(-|\xi|^{-3}\sum_{j,\gamma=1}^{n}\partial_{x_j}^{2}[e_\gamma(a^1)]c(e_\gamma)\bigg)\bigg|_{|\xi|=1}=\;0,
\end{align}
then
\begin{align}
	\int_{|\xi|=1}{\rm tr}\bigg(a^{0}\sigma_0(\mathcal{P}_{1})\sigma_{-3}(|D_T|^{-3})\bigg)\sigma(\xi)=4a^{0}f\sum_{j=1}^{n}\partial_{x_j}[e_j(a^1)]{\rm Vol}(S^{2}).
\end{align}

$\mathbf{(II-2)}$ For $a^{0}\sigma_1(\mathcal{P}_{1})\sigma_{-4}(|D_T|^{-3})$:\\
According to \eqref{lem4.2.3} in lemma \ref{lemma4.2} and \eqref{lem4.3.1} in lemma \ref{lemma4.3}, we get
\begin{align}
	&\sigma_1(\mathcal{P}_{1})\sigma_{-4}(|D_T|^{-3})(x_0)\nonumber\\
	&=-3|\xi|^{-5}f\sum_{a,j,\gamma=1}^{n}\partial_{x_j}[e_\gamma(a^1)]\bigl\{c(e_a),c(e_1)c(e_2)c(e_3)\bigr\}c(e_\gamma)\xi_a\xi_j\nonumber\\
	&+\frac{3}{2}|\xi|^{-5}f^2\sum_{a,j=1}^{n}\bigl\{c(e_a),c(e_1)c(e_2)c(e_3)\bigr\}c(e_1)c(e_2)c(e_3)c(e_j)c(da^1)\xi_a\xi_j\nonumber\\		&+\frac{3}{2}|\xi|^{-5}f^2\sum_{a,j=1}^{n}\bigl\{c(e_a),c(e_1)c(e_2)c(e_3)\bigr\}c(e_j)c(e_1)c(e_2)c(e_3)c(da^1)\xi_a\xi_j\nonumber\\
	&-\frac{3}{2}|\xi|^{-5}f^2\sum_{a,j=1}^{n}\bigl\{c(e_a),c(e_1)c(e_2)c(e_3)\bigr\}c(da^1)c(e_1)c(e_2)c(e_3)c(e_j)\xi_a\xi_j\nonumber\\
	&-\frac{3}{2}|\xi|^{-5}f^2\sum_{a,j=1}^{n}\bigl\{c(e_a),c(e_1)c(e_2)c(e_3)\bigr\}c(da^1)c(e_j)c(e_1)c(e_2)c(e_3)\xi_a\xi_j,
\end{align}
based on the relation of the Clifford action, we can obtain the equality
\begin{align}
	&{\rm tr}\bigg(-3|\xi|^{-5}f\sum_{a,j,\gamma=1}^{n}\partial_{x_j}[e_\gamma(a^1)]\bigl\{c(e_a),c(e_1)c(e_2)c(e_3)\bigr\}c(e_\gamma)\xi_a\xi_j\bigg)\bigg|_{|\xi|=1}\nonumber\\
	=&-3f\sum_{a,j,\gamma=1}^{n}\partial_{x_j}[e_\gamma(a^1)]\xi_a\xi_j{\rm tr}\bigg(c(e_a)c(e_1)c(e_2)c(e_3)c(e_\gamma)+c(e_1)c(e_2)c(e_3)c(e_a)c(e_\gamma)\bigg)\nonumber\\
	=&\left\{\begin{array}{ll}0, 
		& \text { if } a\neq\gamma; \\
		-12f\sum_{a,j=1}^{n}\partial_{x_j}[e_a(a^1)]\xi_a\xi_j, 
		& \text { if } a= \gamma,
	\end{array}\right.
\end{align}
and
\begin{align}
	&{\rm tr}\bigg(\sum_{a=1}^{n}\bigl\{c(e_a),c(e_1)c(e_2)c(e_3)\bigr\}c(e_1)c(e_2)c(e_3)c(e_a)c(da^1)\bigg)\bigg|_{|\xi|=1}=0,\\
	&{\rm tr}\bigg(\sum_{a=1}^{n}\bigl\{c(e_a),c(e_1)c(e_2)c(e_3)\bigr\}c(e_a)c(e_1)c(e_2)c(e_3)c(da^1)\bigg)\bigg|_{|\xi|=1}=0,\\
	&{\rm tr}\bigg(\sum_{a=1}^{n}\bigl\{c(e_a),c(e_1)c(e_2)c(e_3)\bigr\}c(da^1)c(e_1)c(e_2)c(e_3)c(e_a)\bigg)\bigg|_{|\xi|=1}=0,\\
	&{\rm tr}\bigg(\sum_{a=1}^{n}\bigl\{c(e_a),c(e_1)c(e_2)c(e_3)\bigr\}c(da^1)c(e_a)c(e_1)c(e_2)c(e_3)\bigg)\bigg|_{|\xi|=1}=0,
\end{align}
then
\begin{align}
	\int_{|\xi|=1}{\rm tr}\bigg(a^{0}\sigma_1(\mathcal{P}_{1})\sigma_{-4}(|\tilde{D}|^{-3})\bigg)\sigma(\xi)=-4a^{0}f\sum_{a=1}^{n}\partial_{x_a}[e_a(a^1)]{\rm Vol}(S^{2}).
\end{align}

$\mathbf{(II-3)}$ For $(-i)a^{0}\sum_{j=1}^n\partial_{\xi_j}[\sigma_1(\mathcal{P}_{1})]\partial_{x_j}[\sigma_{-3}(|D_T|^{-3})]$:\\
According to \eqref{lem4.2.2} in lemma \ref{lemma4.2} and \eqref{lem4.3.1} in lemma \ref{lemma4.3}, we get
\begin{align}
	\partial_{x_j}[\sigma_{-3}(|D_T|^{-3})]=-\frac{1}{2} |\xi|^{-5}\sum_{a,b,c,d=1}^{n} \left(R_{a c b j} x^{c}+ R_{a j b d} x^{d}\right) \xi_{a} \xi_{b},
\end{align}

then
\begin{align}
	\partial_{x_j}[\sigma_{-3}(|D_T|^{-3})](x_0)=\;0.
\end{align}

\begin{align}
	\int_{|\xi|=1}{\rm tr}\bigg((-i)a^{0}\sum_{j=1}^n\partial_{\xi_j}[\sigma_1(\mathcal{P}_{1})]\partial_{x_j}[\sigma_{-3}(|D_T|^{-3})]\bigg)\sigma(\xi)=\;0.
\end{align}

Summing from {\bf (II-1)} to {\bf (II-3)} in turn, we get
\begin{align}
	\int_{|\xi|=1}{\rm tr}\bigg[\sigma_{-3}\bigg(a^{0} \nabla\left(\left[D_T, a^{1}\right]\right)|D_T|^{-3}\bigg)(x_0)\bigg]\sigma(\xi)=\;0.
\end{align}

$\mathbf{Part~~III)}$
Let $n=3$, by \cite{FGV2}, we need to compute    $\int_{S^*M}{\rm tr}\bigg[\sigma_{-3}\bigg(a^{0} \nabla^{2}\left(\left[D_T, a^{1}\right]\right)|D_T|^{-5}\bigg)\bigg](x,\xi)$,   $\mathcal{P}_{2}:=\nabla^{2}\left(\left[D_T, a^{1}\right]\right)$Based on the algorithm yielding the principal
symbol of a product of pseudo-differential operators in terms of the principal symbols of the factors, we have

\begin{align}
	\sigma_{-3}\bigg(a^{0}\mathcal{P}_{2}|D_T|^{-5}\bigg)=a^{0}\sigma_{2}(\mathcal{P}_{2})\sigma_{-5}(|D_T|^{-5})
\end{align}

According to \eqref{lem4.2.4} in lemma \ref{lemma4.2}, we get
\begin{align}
	\sigma_{-5}(|D_T|^{-5})(x_0)=|\xi|^{-5}.
\end{align}

\begin{align}
	\mathcal{P}_{2}:=&\nabla^{2}\left(\left[D_T, a^{1}\right]\right)\nonumber\\
	=&\nabla\left(\left[D_T^2,c(da^1)\right]\right)\nonumber\\
	=&\left[\tilde{D}^2,[D_T^2,c(da^1)]\right]\nonumber\\
	:=&\left[D_T^2,\mathcal{P}_{1}\right],
\end{align}
hence
\begin{align}
	\sigma_2(\mathcal{P}_{2})=\sigma_1(D_T^2 )\sigma_1( \mathcal{P}_{1} )- \sigma_1( \mathcal{P}_{1} )\sigma_1(D_T^2 ) +(-\sqrt{-1})\sum_{j=1}^{n}\partial_{\xi_j}\big(\sigma_2(D_T^2 )\big)\partial_{x_j}\big(\sigma_1(\mathcal{P}_{1} )\big).
\end{align}
According to \eqref{lem4.2.4} in lemma \ref{lemma4.2} and \eqref{lem4.3.3} in lemma \ref{lemma4.3}, we get
\begin{align} 
	\sigma_{-3}\bigg(a^{0}\mathcal{P}_{2}|D_T|^{-5}\bigg)=&a^{0}\sigma_{2} \left( \nabla^2\left(\left[D_T, a^{1}\right]\right)\right)	\sigma_{-5}(|D_T|^{-5})(x_0)\nonumber\\
	=&-a^{0}|\xi|^{-5}f(x_0)\bigg(c(I)c(\xi)c(I)c(\xi)c(da^1)+c(\xi)c(I)c(\xi)c(I)c(da^1)\bigg)\nonumber\\
	&+a^{0}|\xi|^{-5}f(x_0)\bigg(c(da^1)c(I)c(\xi)c(I)c(\xi)+c(da^1)c(\xi)c(I)c(\xi)c(I)\bigg)\nonumber\\
	&-\frac{1}{2}a^{0}|\xi|^{-5}\sum_{j,\mu, t, s=1}^{n} R_{j \mu t s}(x_0)c(e_s)c(e_t)c(da^1)\xi_j \xi_\mu \nonumber\\ &+\frac{1}{2}a^{0}|\xi|^{-5} \sum_{j,\mu, t, s=1}^{n}R_{j \mu t s}(x_0)c(da^1)c(e_s)c(e_t)\xi_j \xi_\mu \nonumber\\
	&+2a^{0}|\xi|^{-5}\sum_{j,l=1}^{n}\frac{\partial f}{\partial_{x_j}}(x_0)\bigg(c(I)c(e_l)c(da^1)+c(e_l)c(I)c(da^1)\bigg)\xi_j \xi_l\nonumber\\
	&+4a^{0}|\xi|^{-5}f\sum_{j,l,\gamma=1}^{n}\partial_{x_j}[e_\gamma(a^1)](x_0)\bigg(c(I)c(e_l)c(r_\gamma)+c(e_l)c(I)c(e_\gamma)\bigg)\xi_j \xi_l\nonumber\\
	&-2a^{0}|\xi|^{-5}\sum_{j,l=1}^{n}\frac{\partial f}{\partial_{x_j}}(x_0)\bigg(c(da^1)c(I)c(e_l)+c(da^1)c(e_l)c(I)\bigg)\xi_j \xi_l\nonumber\\
	&-4a^{0}|\xi|^{-5}f\sum_{j,l,\gamma=1}^{n}\partial_{x_j}[e_\gamma(a^1)](x_0)\bigg(c(r_\gamma)c(I)c(e_l)+c(e_\gamma)c(e_l)c(I)\bigg)\xi_j \xi_l\nonumber\\	
	&+8\sqrt{-1}a^{0}|\xi|^{-5}\sum_{j,l,\gamma=1}^{n}\partial_{x_j}\partial_{x_l}[e_\gamma(a^1)](x_0)c(e_\gamma)\xi_j \xi_l,
\end{align}
Based on the relation of the Clifford action and ${\rm tr}\mathcal{XY}={\rm tr}\mathcal{YX}$, we get
\begin{align}
	\int_{|\xi|=1}{\rm tr}\bigg(\sigma_{-3}\big(a^{0} \nabla^{2}\left(\left[D_T, a^{1}\right]\right)|D_T|^{-5}\big)\bigg)\sigma(\xi)=\;0.
\end{align}

Summing from {\bf part I)} to {\bf part III)} in turn, we get

\begin{align}
	\rm{Trace} \otimes\varphi_{1}\left(a^{0}, a^{1}\right)=&\int\hspace{-1.05em}- a^{0}\left[D_T, a^{1}\right]|D_T|^{-1}  -\frac{1}{4} \int\hspace{-1.05em}- a^{0} \nabla\left(\left[D_T, a^{1}\right]\right)|D_T|^{-3}  +\frac{1}{8} \int\hspace{-1.05em}- a^{0} \nabla^{2}\left(\left[D_T, a^{1}\right]\right)|D_T|^{-5}\nonumber\\
	=&\;0.
\end{align}
By the above computations and the definition \ref{CSdefn}, we can get Theorem \ref{thm2}.

\section*{ Acknowledgements}
This work is sponsored by Natural Science Foundation of Xinjiang Uygur Autonomous Region 2024D01C341 and supported by the National Natural Science Foundation of China 11771070, 12061078 .
 The authors thank the referee for his (or her) careful reading and helpful comments.

\section*{Declarations}

\begin{itemize}
	\item  {\bf Funding} The Natural Science Foundation of Xinjiang Uygur Autonomous Region 2024D01C341 and the National Natural Science Foundation of China 11771070, 12061078.
	\item {\bf Data availability} The authors declare that The data supporting the findings of this study are available within the paper.
	\item {\bf Conflict of interest} The authors declare no Confict of interest.
	\item {\bf Ethics approval and consent to participate} Ethics approval and consent to participate.
	\item {\bf Consent for publication} All the authors agreed to publish this research.
	\item {\bf Author contribution} All authors contributed to the study conception and design. Material preparation, data collection and analysis were performed by JH and YW. The frst draft of the manuscript was written by JH and all authors commented on previous versions of the manuscript. All authors read and approved the fnal manuscript.
\end{itemize}

\end{document}